\newcommand{\om}{\omega}
\newcommand{\halv}{\frac{1}{2}}
\newcommand{\calo}{{\cal O}}
\newcommand{\Rs}{\mbox{Re$(s)$}}
\newcommand{\lsls}{<\!<}
\newcommand{\bis}{^{\prime\prime}}
\newcommand{\Rk}{{\rm Re}\,(\kappa)}
\newcommand{\ub}{{\bf u}}
\newcommand{\p}{\partial}
\newtheorem{remark}{Remark}
\newtheorem{lemma}{Lemma}
\newtheorem{corollary}{Corollary}
\newtheorem{theorem}{Theorem}
\begin{document}

\title{Boundary estimates for the elastic wave equation in almost incompressible materials\thanks{This work was performed under the auspices of the U.S. Department
    of Energy by Lawrence Livermore National Laboratory under Contract DE-AC52-07NA27344.}}
\author{Heinz-Otto Kreiss\thanks{Tr\"ask\"o-Stor\"o Institute of Mathematics, Stockholm, Sweden.}
  and N. Anders Petersson\thanks{Corresponding author. Center for Applied Scientific Computing, Lawrence Livermore
    National Laboratory, P.O. Box 808, Livermore, CA 94551, E-mail: {\tt andersp@llnl.gov}.}  }
\date{\today} \maketitle
\begin{abstract}
We study the half-plane problem for the elastic wave equation subject to a free surface boundary
condition, with particular emphasis on almost incompressible materials. A normal mode analysis is
developed to estimate the solution in terms of the boundary data, showing that the problem is {\it
  boundary stable}. The dependence on the material properties, which is difficult to analyze by the
energy method, is made transparent by our estimates. The normal mode technique is used to analyze
the influence of truncation errors in a finite difference approximation. Our analysis explains why
the number of grid points per wave length must be increased when the shear modulus ($\mu$) becomes
small, that is, for almost incompressible materials. To obtain a fixed error in the phase velocity
of Rayleigh surface waves as $\mu\to 0$, our analysis predicts that the grid size must be
proportional to $\mu^{1/2}$ for a second order method. For a fourth order method, the grid size can
be proportional to $\mu^{1/4}$. Numerical experiments confirm these scalings and illustrate the
superior efficiency of the fourth order method.



\end{abstract}

\section{Introduction}

Consider the half-plane problem for the two-dimensional elastic wave equation in a homogeneous
isotropic material. By scaling time to give unit density, the displacement with Cartesian components $(u,v)^T$
is governed by
\begin{equation}\label{uv-eqn}
\left\{\begin{array}{r@{\,=\,}l}
 u_{tt}&\mu \Delta u + (\lambda + \mu)(u_{x}+v_{y})_x + F_1(x,y,t),\\
 v_{tt}&\mu \Delta v + (\lambda + \mu)(u_{x}+v_{y})_y + F_2(x,y,t),
\end{array}\right.
\quad x\geq 0,\ -\infty<y<\infty,\ t\geq 0,
\end{equation}
where $(F_1, F_2)^T$ is the internal forcing. Here, $\lambda$ and $\mu>0$ are the first and second
Lam\'e parameters of the material. We assume that both parameters are constant and $\lambda>0$. The
displacement is subject to initial conditions
\begin{equation}\label{uv-initial}
\begin{cases}u(x,y,0) = f_{10}(x,y),\\
u_t(x,y,0) = f_{20}(x,y),
\end{cases}\quad
\begin{cases}v(x,y,0) = f_{11}(x,y),\\
v_t(x,y,0) = f_{21}(x,y),
\end{cases}
\quad x\geq 0,\ -\infty<y<\infty.
\end{equation}
In this paper we consider normal stress boundary conditions along the $x=0$ boundary,
\begin{equation}
\begin{cases}
u_x+\gamma^2 v_y=g_1(y,t),\\
u_y+v_x=g_2(y,t),
\end{cases}\quad x=0,\ -\infty < y < \infty,\ t\geq 0,\label{uv-bc}
\end{equation}
where $g_1$ and $g_2$ are boundary forcing functions, and
\[
\gamma^2=\frac{\lambda}{2\mu+\lambda}.
\]
When $g_1=0$ and $g_2=0$, \eqref{uv-bc} is called a free surface boundary condition.

Since time was scaled to give unit density, the elastic energy is given by
\begin{equation}\label{elastic-energy}
E(t) = \frac{1}{2} \int_{-\infty}^\infty \int_{0}^\infty (u_t^2 + v_t^2) + \lambda(u_x + v_y)^2 + \mu \left(
2u_x^2 + 2 v_y^2 + (u_y + v_x)^2 \right)\, dx\, dy.
\end{equation}
It is well known (see e.g.~Achenbach~\cite{achenbach}, pp.~59-61) that the elastic energy satisfies
\begin{multline*}
\frac{d}{dt}E(t) = \int_{-\infty}^\infty\int_{0}^\infty (u_t F_1 + v_t F_2)\, dx dy \\
- \int_{-\infty}^\infty \left.\left(u_t
\left((2\mu+\lambda)u_x +\lambda v_y\right) + v_t \mu (u_y+v_x)\right)\right|_{x=0}\,dy.
\end{multline*}
In particular, without boundary  and interior forcing, the
elastic energy is conserved,
\begin{equation}\label{eq:energy6}
E(t) = E(0),\quad t>0, \quad  g_1=g_2=0,\ F_1=F_2=0.
\end{equation} 
Note that the elastic energy is a semi-norm of the solution. The energy estimate bounds this
semi-norm in terms of the initial data and the internal forcing $(F_1, F_2)^T$. For this reason, the
elastic wave equation is a well-posed problem. However, the energy estimate does not provide
detailed insight into how the solution depends on the material parameters, or the boundary data.

The material parameters, in particular the ratio $\mu/\lambda$, strongly influences the accuracy of
numerical solutions of the elastic wave equation. As a motivating example, we propagate a Rayleigh
surface wave using a second order accurate finite difference method. In the numerical experiment, we make the
$y$-direction 1-periodic and take the wave length to be one. A free surface boundary condition is
imposed at $x=0$. The Rayleigh surface wave propagates harmonically in the $y$-direction and decays
exponentially in $x$, see Figure~\ref{fig:7}. 
\begin{figure}
\begin{centering}
\includegraphics[width=0.45\textwidth]{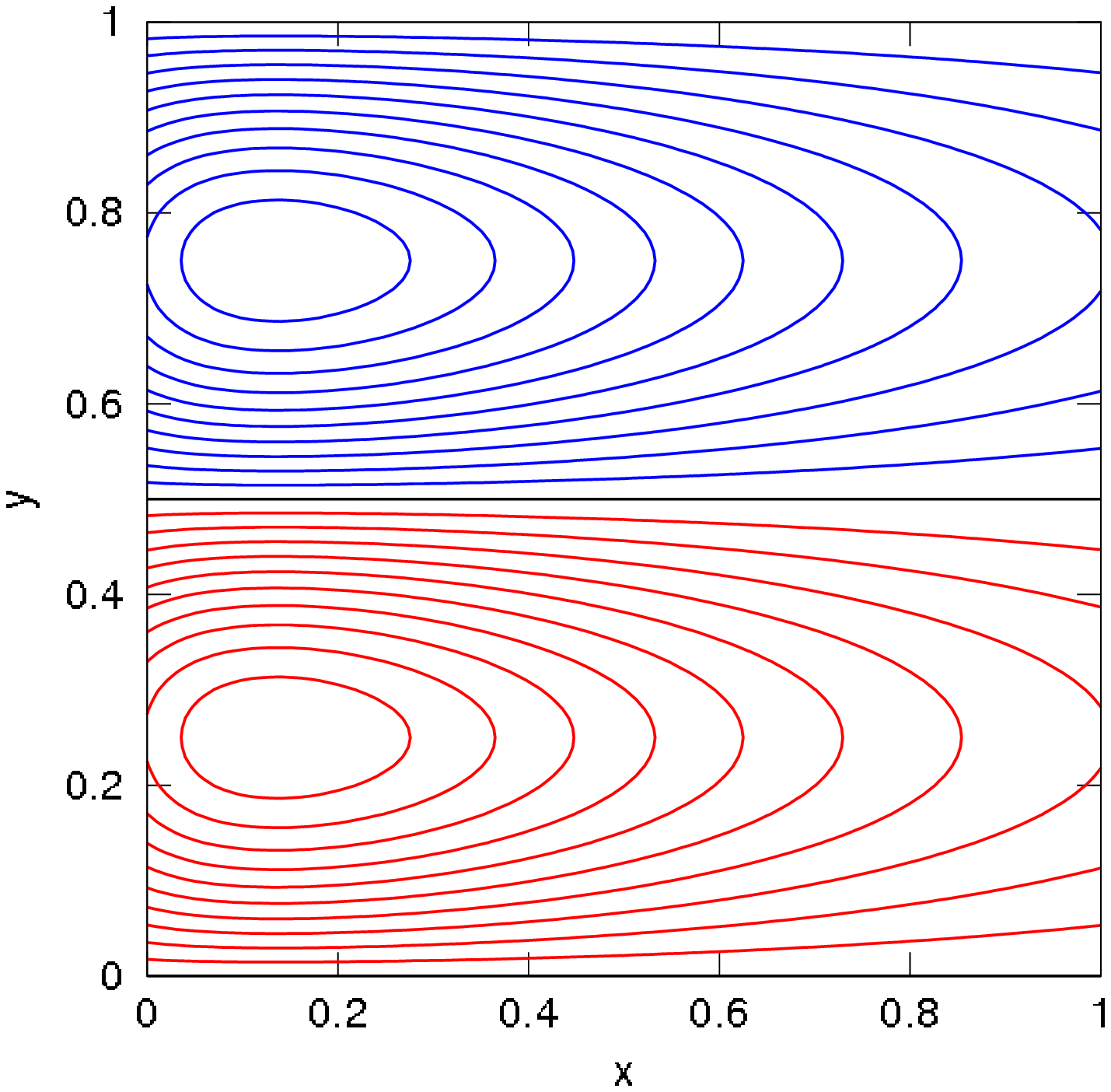}
\includegraphics[width=0.45\textwidth]{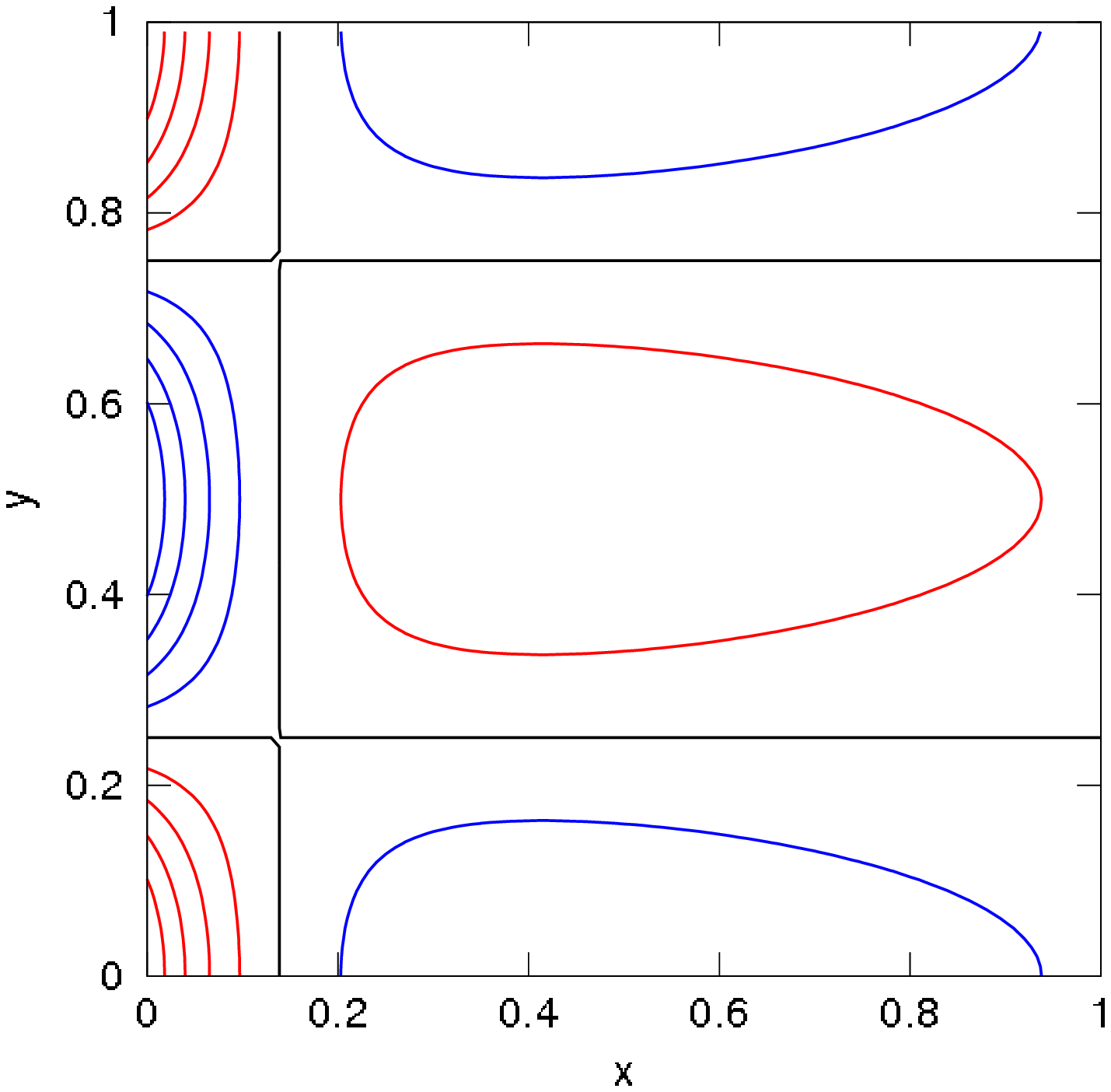}
\caption{A contour plot of a Rayleigh surface wave as function of $(x,y)$ at $t=0$ for a material
  with $\lambda=1$ and $\mu=0.01$. The $u$-component is shown to the left and the $v$-component to
  the right. The contour levels are given between -0.5 and 0.5, with spacing 0.05. Red
  and blue lines correspond to negative and positive values, respectively. The zero level is plotted
  in black. }
\label{fig:7}
\end{centering}
\end{figure}
We take $\lambda=1$ and vary $\mu$, which gives the surface wave a phase velocity that is
proportional to $\sqrt{\mu}$. We discretize the elastic wave equation on a grid with grid size $h$,
corresponding to $P=1/h$ grid points per wave length; further details of this numerical experiment
are presented in \S~\ref{sec:num}. In Figure~\ref{fig:max-err}, we report the error in the numerical
solution at time $t=20$. For the smaller values of $\mu$, a large number of grid points per wave length are needed to
obtain an acceptable error level and a second order convergence rate. For the finest mesh with $200$
grid points per wave length, the error increases by more than an order of magnitude (from $3.76\cdot
10^{-3}$ to $5.09\cdot 10^{-2}$), when $\mu$ decreases by two orders of magnitude (from $10^{-1}$ to
$10^{-3}$). Note that the gradient of the exact solution only depends weakly on $\mu$ and is of the
order ${\cal O}(1/2\pi)$ for all values of $\mu>0$. Hence, the loss of accuracy is not due to poor
resolution in space. Furthermore, the phase velocity of the surface wave becomes slower and slower
as $\mu\to 0$, while the time step is governed by $\sqrt{\lambda+3\mu}$, which tends to
$\sqrt{\lambda}=1$. Hence, the temporal resolution of the surface wave only improves as $\mu\to 0$.
\begin{figure}
 \begin{center}
\includegraphics[width=0.65\textwidth]{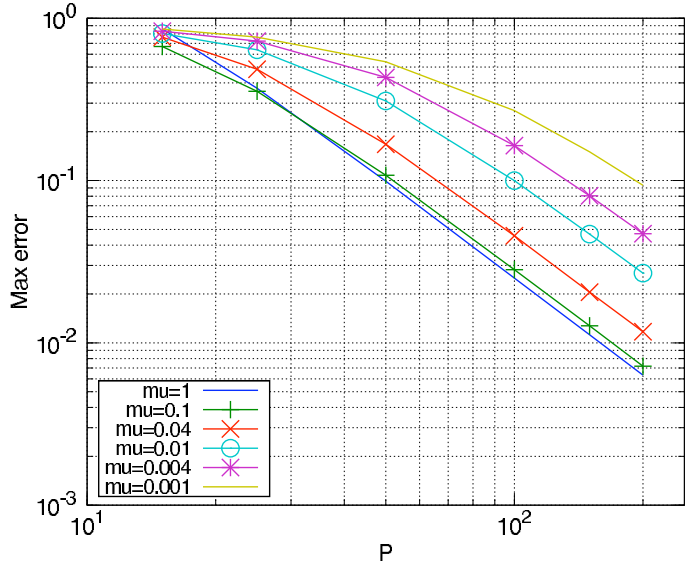}
    \caption{Max error, normalized by the max norm of the exact solution, at time $t=20$ in the
      numerical solution of the Rayleigh surface wave problem when $\lambda=1$ and different values
      of $\mu$. The error is shown as function of the number of grid points per wave length, $P=1/h$.}
    \label{fig:max-err}
 \end{center}
\end{figure}

In this paper we use a normal mode analysis to explain the loss of accuracy as $\mu\to 0$, which
corresponds to the incompressible limit of an elastic material. The normal mode analysis allows us
to estimate the solution in terms of the boundary data, and makes the dependence on the material
parameters transparent. We show that the solution is {\em strongly boundary stable}, except in the
vicinity of the generalized eigenvalues corresponding to surface waves. Here the solution is as
smooth as the boundary data, i.e., only {\em boundary stable} (see~\cite{Kreiss-Ortiz-Petersson} for
definitions of these stability concepts). We develop a modified equation model of the truncation
errors in the numerical calculation, where we view the discretized boundary conditions as a
perturbation of the exact boundary conditions. This analysis reveals how perturbations of the
boundary conditions influence the solution, and how the material parameters enter in the relation.

To analyze the solution of \eqref{uv-eqn}-\eqref{uv-bc}, we follow the technique used by
Kreiss, Ortiz and Petersson~\cite{Kreiss-Ortiz-Petersson} and split the problem into two
parts. First we consider a Cauchy problem, where the definition of the forcing and the initial data
are extended to the whole of ${\mathbb R}^2(x)$. Secondly, we subtract this solution from the
solution of the half-plane problem to obtain a new half-plane problem, where only the boundary data do
not vanish. This is a very natural procedure because all the difficulties and many physical
phenomena arise at the boundary. The new half-plane problem is analyzed in detail using the
Fourier-Laplace transform method, leading to estimates of the solution in terms of the boundary data.

The remainder of the paper is organized in the following way. The properties of the Cauchy problem
are briefly discussed in Section~\ref{sec:cauchy}. The normal mode analysis of the half-plane
problem is developed in Section~\ref{sec:half-plane}. We discuss the eigenvalue problem in
Section~\ref{sec:necessary}-\ref{sec:gen-eig}, leading to necessary conditions for a well-posed
problem. Boundary estimates are derived in Section~\ref{sec:b-force}.  In Section~\ref{sec:trunc},
we use the normal mode theory to perform a modified equation analysis of the discretized boundary
conditions. This analysis shows how the number of grid points per wave length must be increased to
maintain a given error level in the numerical solution when $\mu\to 0$. For the second order method,
the grid size must be proportional to $\mu^{1/2}$, while it suffices to take $h\sim \mu^{1/4}$ for
the fourth order method. These scalings are confirmed by the numerical experiments in
Section~\ref{sec:num}, illustrating that the fourth order method is significantly more efficient
than the second order approach, in particular for small values of $\mu$. Conclusions are given in
Section~\ref{sec:conc}.

\section{The Cauchy problem} \label{sec:cauchy}

In this section we consider the Cauchy problem for \eqref{uv-eqn}-\eqref{uv-initial}.  The
definitions of the forcing functions and the initial data can be smoothly extended to the
whole of ${\mathbb R}^2(x)$. For simplicity we use the same symbols for the extended functions as
for the original ones.

We start by deriving an equation for the divergence of the displacement, $\delta =
u_x + v_y$, by forming the divergence of \eqref{uv-eqn}. This gives
\begin{equation}\label{div-eqn}
\delta_{tt} = (\lambda+2\mu) \Delta \delta + G(x,y,t),\quad -\infty
< (x,y) < \infty,\ t\geq 0,
\end{equation}
where the forcing is $G=\p F_1/\p x + \p F_2/\p y$. The divergence, $\delta=\delta(x,y,t)$, is
subject to initial conditions
\begin{equation}\label{div-ini}
\delta(x,y,0) = \frac{\p f_{10}}{\p x} + \frac{\p f_{11}}{\p y},\quad 
\delta_t(x,y,0) = \frac{\p f_{20}}{\p x} + \frac{\p f_{21}}{\p y},\quad -\infty < (x,y) < \infty.
\end{equation}

By first solving the wave equation for the divergence, we can (in principle) treat the divergence as a forcing in
the Cauchy problems for $u$ and $v$,
\begin{equation}\label{uv-div-eqn}
\left\{\begin{array}{r@{\,=\,}l}
u_{tt}&\mu \Delta u + \widetilde{F}_1(x,y,t),\\
v_{tt}&\mu \Delta v + \widetilde{F}_2(x,y,t),
\end{array}\right.
\quad -\infty < (x,y) < \infty,\ t\geq 0,
\end{equation}
where
\[
\widetilde{F}_1(x,y,t) = (\lambda + \mu) \delta_x + F_1(x,y,t),\quad
\widetilde{F}_2(x,y,t) = (\lambda + \mu) \delta_y + F_2(x,y,t).
\]
Since $\delta$, $u$, and $v$ all satisfy scalar wave equations, we conclude that the Cauchy problem for the
elastic wave equation is well-posed. 

Note that the wave propagation speed in the wave equation for the divergence is
$\sqrt{\lambda+2\mu}$. For $G=0$, \eqref{div-eqn} admits plane wave solutions of the type
\[
\delta(x,y,t) = e^{i\omega(x \pm c_p\,t)},\quad c_p=\sqrt{\lambda+2\mu}.
\]
Hence, a wave with angular frequency $\xi = \omega c_p$ has wave length
\begin{equation}\label{p-wave}
L_p=\frac{2\pi}{\omega} = \frac{2\pi c_p}{\xi} = \frac{2\pi\sqrt{\lambda+2\mu}}{\xi}.
\end{equation}
Note that $L_p$ stays bounded for $\lambda=\mbox{const.}$, $\mu\to 0$. By taking the curl of
\eqref{uv-eqn}, we can also derive a scalar wave equation for the curl of the displacement,
where the wave propagation speed is $\sqrt{\mu}$. Hence, the elastic wave equation also admits plane
waves with wave length
\[
L_s = \frac{2\pi\sqrt{\mu}}{\xi}.
\]
The length of these waves tend to zero as $\mu\to 0$.

\section{The half-plane problem}\label{sec:half-plane}

We are interested in solutions with bounded $L^2$-norm and therefore we assume
\begin{equation}\label{bounded-L2}
\int_{-\infty}^\infty \int_0^\infty \left(|u|^2 + |v|^2\right) \, dx\, dy = \| \ub\|^2 <
\infty,\quad\mbox{for every fixed $t$}.
\end{equation}

Throughout the remainder of the paper, $s=\eta+i\xi$ denotes a complex number where $\eta$, $\xi$ are real numbers.
As a preliminary, we define the branch cut of $\sqrt{a+i b}$ by
\[
-\pi< {\rm arg}\,( a+i b)\le \pi,\quad {\rm arg}\,\sqrt{ a + i b}=\halv\, {\rm arg}\, (a+ i b),
\]
where $a$ and $b$ are real numbers, 

\subsection{A necessary condition for well-posedness, the eigenvalue problem}\label{sec:necessary}

We start with a test to find a necessary condition such that the half-plane problem is well
posed. 
\begin{lemma}\label{lem:necessary}
Let $F_1=F_2=0$ and $g_1=g_2=0$. The problem \eqref{uv-eqn}-\eqref{uv-bc} is not well-posed if we can
find a non-trivial simple wave solution of the type
\begin{equation}
\label{eq:4}
\begin{array}{c}
u=U(x)e^{st+i\om y},\quad  v=V(x)e^{st+i\om y},\\
|U|_\infty< \infty,\quad |V|_\infty<\infty,\quad \Rs > 0,\ \mbox{$\omega$ real}.
\end{array}
\end{equation}
\end{lemma}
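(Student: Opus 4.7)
The plan is to derive a contradiction from the scale invariance of the problem: a single exponentially growing mode generates a one-parameter family of modes whose growth rates are unbounded, and this cannot coexist with any well-posedness estimate. First I would note that the homogeneous system \eqref{uv-eqn}--\eqref{uv-bc} is invariant under the stretching $(x,y,t)\mapsto(x/\alpha, y/\alpha, t/\alpha)$ for every $\alpha>0$, because the PDE is homogeneous of degree two in derivatives and the boundary conditions are linear and first order. Consequently, if $(U(x),V(x))e^{st+i\om y}$ is a nontrivial bounded solution, then so is the scaled pair
$$
u_\alpha(x,y,t)=U(\alpha x)\,e^{\alpha(st+i\om y)},\qquad v_\alpha(x,y,t)=V(\alpha x)\,e^{\alpha(st+i\om y)},
$$
for every $\alpha>0$. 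At $t=0$ the amplitude is bounded independently of $\alpha$, but the temporal growth rate is $\alpha\,\Rs$, which tends to $\infty$ as $\alpha\to\infty$.

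Next I would convert this into a genuine contradiction with well-posedness. A well-posed problem must admit an estimate of the form $\|\ub(\cdot,t)\|_{L^2}\le K\,e^{\alpha_0 t}\|\ub(\cdot,0)\|_{H^p}$ for fixed constants $K,\alpha_0,p$. Since the bare modes $u_\alpha,v_\alpha$ are not themselves in $L^2$, I would localize by multiplying the initial data by a smooth compactly supported envelope $\phi(y)$; this yields legitimate $L^2$ initial data whose $H^p$ norm grows only polynomially in $\alpha$, while a perturbation/wave-packet argument shows that the resulting genuine solution still grows essentially like $e^{\alpha\,\Rs\,t}$ on any fixed time interval. Choosing $\alpha$ so that $\alpha\,\Rs>\alpha_0$ then beats the polynomial prefactor and violates the hypothesized estimate.

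The main obstacle is exactly this $L^2$-localization in $y$: the scaled modes are bounded but not square-integrable, so one must pass from a formal eigensolution to a genuine $L^2$-counterexample without losing the exponential amplification. This can be handled either by the wave-packet construction sketched above, or by appealing to the general framework in~\cite{Kreiss-Ortiz-Petersson}, in which the existence of such a mode is known to obstruct the resolvent estimates that underlie well-posedness. The rest of the argument — the invariance check and the observation that $\alpha\,\Rs$ can be made arbitrarily large — is essentially mechanical.
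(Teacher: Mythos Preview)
Your approach is correct and essentially identical to the paper's: the paper uses exactly the same rescaled family $u_\alpha=U(\alpha x)e^{\alpha(st+i\om y)}$, $v_\alpha=V(\alpha x)e^{\alpha(st+i\om y)}$ and concludes directly from the unbounded growth rates $\alpha\,\Rs$ that the problem is not well-posed. Your discussion of the $L^2$-localization and the wave-packet construction in fact goes beyond the paper, which leaves that step implicit and simply asserts that arbitrarily fast exponential growth precludes well-posedness.
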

\begin{proof}
If we have found such a solution, then
\[
u_1=U(\alpha x)e^{s\alpha t+i\om\alpha y},\quad  v_1=V(\alpha x)e^{s\alpha t+i\om\alpha y},
\]
is also a solution for any $\alpha>0$. Since $\Rs>0$, we can find solutions that grow arbitrarily
fast in time. The problem is therefore not well-posed.
\end{proof}
We shall now discuss whether there are such solutions. Introducing (\ref{eq:4}) into (\ref{uv-eqn})
gives
\begin{equation}\label{eig-1}
\left\{
\begin{array}{r@{\,=\,}l}
(s^2+\mu \om^2)U - (2\mu+\lambda)U_{xx} - i(\lambda+\mu)\om V_x & 0,\\
(s^2+(2\mu+\lambda)\om^2)V -\mu V_{xx} - i(\lambda+\mu)\om U_x & 0,
\end{array}\right.
\quad x\geq 0.
\end{equation}
To derive boundary conditions for $U$ and $V$, we insert \eqref{eq:4} into \eqref{uv-bc},
\begin{equation}\label{eig-2}
\left\{
\begin{array}{r@{\,=\,}l}
U_x + i\gamma^2\om V & 0,\\
i\om U + V_x & 0,
\end{array}\right.
\quad x=0.
\end{equation}

Equation \eqref{eig-1} is a system of linear ordinary differential equations with constant
coefficients. It can be solved using the ansatz
\begin{equation}\label{ode-ansatz}
U(x) = u_0 e^{-\kappa x},\quad V(x) = v_0 e^{-\kappa x}.
\end{equation}
Inserting \eqref{ode-ansatz} into \eqref{eig-1} gives a linear system for $(u_0, v_0)^T$, which
can be written
\begin{alignat}{2}
\left(s^2+\mu \om^2-(2\mu+\lambda)\kappa^2\right)u_0 + i(\lambda+\mu)\om\kappa\, v_0 &= 0,\label{eq:5a}\\
i(\lambda+\mu)\om\kappa\, u_0 + \left(s^2+(2\mu+\lambda)\om^2-\mu\kappa^2\right)v_0 &= 0.\label{eq:5b}
\end{alignat}
Let
\[
\zeta=s^2+\mu\om^2-\mu\kappa^2.
\]
Then we can write (\ref{eq:5a})-(\ref{eq:5b}) as
\begin{alignat}{2}
(\zeta-(\lambda+\mu)\kappa^2)\,u_0 + i(\lambda+\mu)\om\kappa\, v_0 &= 0,\label{eq:6a}\\
i(\lambda+\mu)\om\kappa\, u_0+(\zeta+(\lambda+\mu)\om^2)\,v_0      &= 0.\label{eq:6b}
\end{alignat}
This system has a non-trivial solution if and only if its determinant is zero,
\[
\left[\zeta-(\lambda+\mu)\kappa^2\right]\left[\zeta+(\lambda+\mu)\om^2\right]+
(\lambda+\mu)^2\om^2\kappa^2=0.
\]
There are two possibilities. Either $\zeta=0$, or $\zeta+(\lambda+\mu)(\om^2-\kappa^2)=0$,
corresponding to
\[
\kappa=\pm\sqrt{\om^2+\frac{s^2}{\mu}},\quad\mbox{or,}\quad
\kappa=\pm\sqrt{\om^2+\frac{s^2}{(2\mu+\lambda)}}.
\]

In appendix~\ref{app:a} we shall prove that there is a constant $\delta>0$ such that
\[
{\rm Re}\left(\sqrt{\om^2+\frac{s^2}{\mu}}\,\right) \geq \delta \,{\rm Re}(s),\quad
{\rm Re}\left(\sqrt{\om^2+\frac{s^2}{\lambda+2\mu}}\,\right) \geq \delta \,{\rm Re}(s),\quad {\rm Re}(s)>0.
\]
Thus, for $\Rs>0$,  there are two solutions that have bounded $L^2$-norm:
\begin{equation}\label{eq:8a}
\begin{pmatrix}
U(x) \\ V(x)
\end{pmatrix} = e^{-\kappa_1 x}\ub_1 +
e^{-\kappa_2 x}\ub_2 ,\quad \ub_j = \begin{pmatrix}u_{0j}\\ v_{0j}\end{pmatrix},\quad j=1,2,
\end{equation}
with
\begin{equation}
\label{eq:8b}
\kappa_1=\sqrt{\om^2+\frac{s^2}{\mu}},\quad \kappa_2=\sqrt{\om^2+\frac{s^2}{\lambda+2\mu}}.
\end{equation}
It is convenient to calculate the eigenvectors by inserting ($\kappa_1, \ub_1)$ into (\ref{eq:6b})
and $(\kappa_2, \ub_2)$ into (\ref{eq:6a}),
\begin{alignat*}{2}
i(\lambda+\mu)\om\kappa_1 u_{01} + (\lambda+\mu)\om^2 v_{01}  &=0, \\
-(\lambda+\mu)\om^2u_{02} + i(\lambda+\mu)\om \kappa_2 v_{02} &=0.
\end{alignat*}
Therefore,
\[
 v_{01}=-\frac{i\kappa_1}{\om}u_{01},\quad
 v_{02}=-\frac{i\om}{\kappa_2}u_{02}.
\]
We summarize these results in the following lemma.
\begin{lemma}\label{lem:gen-sol}
Assume $\Rs > 0$ and $\om\ne 0$. Then $\kappa_1\ne \kappa_2$ and the general solution of the
ordinary differential equation (\ref{eig-1}) can be written as
\begin{equation}
\begin{pmatrix}
U(x) \\ V(x)
\end{pmatrix} =
u_{01}
\begin{pmatrix}
1 \\ -\dfrac{i\kappa_1}{\om}
\end{pmatrix}  e^{-\kappa_1 x} + 
u_{02}
\begin{pmatrix}
1 \\ -\dfrac{i\omega}{\kappa_2}
\end{pmatrix}e^{-\kappa_2 x},
\label{eq:10}
\end{equation}
where $\kappa_1$ and $\kappa_2$ are given by \eqref{eq:8b}.
\end{lemma}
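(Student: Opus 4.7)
The plan is to collect and complete the argument already sketched in the paragraphs preceding the lemma. Essentially three things must be checked: distinctness of $\kappa_1$ and $\kappa_2$; the fact that the two-dimensional span of the decaying exponentials exhausts the admissible (bounded) solutions; and the explicit form of the two eigenvectors.

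First I would show $\kappa_1\ne\kappa_2$. From \eqref{eq:8b},
\[
\kappa_1^2-\kappa_2^2 = s^2\left(\frac{1}{\mu}-\frac{1}{\lambda+2\mu}\right) = \frac{(\lambda+\mu)\,s^2}{\mu(\lambda+2\mu)},
\]
and since $\lambda,\mu>0$ and ${\rm Re}(s)>0$ forces $s\ne 0$, the right-hand side is nonzero, hence $\kappa_1^2\ne\kappa_2^2$ and in particular $\kappa_1\ne\kappa_2$. Next I would invoke the analysis already carried out in the text: the characteristic polynomial of \eqref{eig-1} factors as $\zeta\bigl[\zeta+(\lambda+\mu)(\om^2-\kappa^2)\bigr]=0$, producing the four roots $\pm\kappa_1,\pm\kappa_2$. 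Appendix~\ref{app:a} guarantees ${\rm Re}(\kappa_1),{\rm Re}(\kappa_2)>0$ for ${\rm Re}(s)>0$, so only the two decaying modes are compatible with the boundedness requirement $|U|_\infty,|V|_\infty<\infty$. Since $\kappa_1\ne\kappa_2$ these two exponentials are linearly independent, so the bounded solution space is spanned by them.

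For the eigenvectors I would substitute each root back into \eqref{eq:6a}--\eqref{eq:6b}. When $\kappa=\kappa_1$ the definition of $\kappa_1$ gives $\zeta=0$, and \eqref{eq:6b} reduces to
\[
i(\lambda+\mu)\om\kappa_1\,u_{01}+(\lambda+\mu)\om^2\,v_{01}=0,
\]
from which (dividing by $(\lambda+\mu)\om\ne 0$) $v_{01}=-i\kappa_1 u_{01}/\om$. When $\kappa=\kappa_2$ one has $\zeta=(\lambda+\mu)(\kappa_2^2-\om^2)$, so \eqref{eq:6a} collapses to
\[
-(\lambda+\mu)\om^2\,u_{02}+i(\lambda+\mu)\om\kappa_2\,v_{02}=0,
\]
yielding $v_{02}=-i\om u_{02}/\kappa_2$, and assembling these two eigenvectors reproduces \eqref{eq:10}.

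The one step that is not a one-liner, and which I expect to be the main obstacle, is ensuring that the divisions performed in extracting the eigenvectors are legitimate, i.e.\ that $\om\ne 0$ and $\kappa_2\ne 0$. The former is an explicit hypothesis. For the latter, $\kappa_2=0$ would force $s^2=-\om^2(\lambda+2\mu)$, a non-positive real number; writing $s=\eta+i\xi$ with $\eta>0$, the imaginary part $2\eta\xi$ of $s^2$ must vanish, so $\xi=0$, but then $s^2=\eta^2>0$, a contradiction. Once this is noted, the eigenvector formulas are well defined and the statement of the lemma follows.
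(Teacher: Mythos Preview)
Your proposal is correct and follows essentially the same route as the paper: the lemma is stated there as a summary of the preceding discussion, which derives the characteristic roots, invokes Appendix~\ref{app:a} to discard the growing modes, and computes the two eigenvectors by inserting $(\kappa_1,\ub_1)$ into \eqref{eq:6b} and $(\kappa_2,\ub_2)$ into \eqref{eq:6a}. Your write-up makes two points explicit that the paper leaves to the reader --- the verification that $\kappa_1\ne\kappa_2$ via $\kappa_1^2-\kappa_2^2=(\lambda+\mu)s^2/\bigl(\mu(\lambda+2\mu)\bigr)\ne 0$, and the check that $\kappa_2\ne 0$ so that the second eigenvector is well defined --- but the underlying argument is identical.
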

\begin{remark} Inserting (\ref{eq:10}) into (\ref{eq:4}) shows that all simple wave solutions satisfy
\begin{alignat}{2}
v_x - u_y &= \frac{i s^2}{\mu\om}\, u_{01}\, e^{st+i\om y - \kappa_1 x},\label{rotation}\\
u_x + v_y &= -\frac{s^2}{(\lambda+2\mu)\kappa_2}\,u_{02}\,e^{st+i\om y - \kappa_2 x}.\label{divergence}
\end{alignat}
Hence, $u_{01}$ and $u_{02}$ are proportional to the curl and divergence of the solution, respectively.
\end{remark}

Introducing \eqref{eq:10} into the boundary conditions \eqref{eig-2} gives
\begin{alignat}{2}
(1-\gamma^2)\kappa_1\kappa_2 \,u_{01} + (\kappa_2^2 - \gamma^2\om^2)\, u_{02} &= 0,\vspace{2mm}\label{bc1-hom}\\
(\kappa_1^2 + \om^2)\, u_{01} + 2\om^2\, u_{02} &=0.\label{bc2-hom}
\end{alignat}
The linear system \eqref{bc1-hom}-\eqref{bc2-hom} has a non-trivial solution if and only if its
determinant is zero,
\begin{equation}\label{det0}
\Delta =: 2\om^2(1-\gamma^2)\kappa_1\kappa_2 - \left(\kappa_2^2-\gamma^2\om^2\right)\left(\kappa_1^2 + \om^2\right)=0.
\end{equation}
Since $1-\gamma^2=2\mu/(\lambda+2\mu)$, we can write \eqref{det0} in the form
\[
\mu(\lambda+2\mu)\Delta = 4\mu^2\om^4\left(
\sqrt{1+\frac{s^2}{\om^2\mu}}\, \sqrt{1+\frac{s^2}{\om^2(\lambda+2\mu)}} - \left(1+\frac{s^2}{2\mu\om^2}\right)^2
\right) =: 4\mu^2\om^4 \varphi(\tilde{s}),
\]
where
\begin{equation}\label{varphi}
\varphi(\tilde{s}) =: \sqrt{1+\tilde{s}^2}\sqrt{1+\frac{\mu\tilde{s}^2}{\lambda + 2\mu}} - \left(
1+\frac{\tilde{s}^2}{2}\right)^2,\quad \tilde{s} = \frac{s}{|\om|\sqrt{\mu}}.
\end{equation}
Note that the zeros of the determinant (\ref{det0}) are the solutions of $\varphi(\tilde{s})=0$.
\begin{lemma}\label{lem:res.gt.0}
Assume $\om\ne 0$. The function $\varphi(\tilde{s})$ does not have any zeros for ${\rm
  Re}\,(\tilde{s}) > 0$.
\end{lemma}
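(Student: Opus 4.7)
My plan is a proof by contradiction. Suppose $\varphi(\tilde s_0) = 0$ for some $\tilde s_0$ with ${\rm Re}(\tilde s_0) > 0$. By the appendix bound quoted just above \eqref{eq:8a}, both $\sqrt{1+\tilde s^2}$ and $\sqrt{1+\mu\tilde s^2/(\lambda+2\mu)}$ have positive real part throughout the open right half plane, so $\varphi$ is holomorphic there. The first step I would take is to square the equation $\varphi(\tilde s_0)=0$, eliminating the two square roots and producing a polynomial equation in $z := \tilde s_0^2$. Writing $\beta := \mu/(\lambda+2\mu) \in (0,1/2)$ and expanding, this polynomial factors as $z\cdot P(z) = 0$ with $P$ a cubic whose four coefficients are all strictly positive (using $\lambda,\mu>0$).

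Since ${\rm Re}(\tilde s_0)>0$ forces $\tilde s_0 \ne 0$, I must have $P(z)=0$. Here I would invoke the Routh--Hurwitz criterion: a real cubic $a_0z^3+a_1z^2+a_2z+a_3$ with all $a_j>0$ has every root in $\{{\rm Re}(z)<0\}$ iff $a_1a_2>a_0a_3$, and a short arithmetic check reduces this to an inequality of the form $11>7\beta$, which is automatic for $\beta \in (0,1/2)$. So every root of $P$ satisfies ${\rm Re}(z)<0$. If such a root $z$ happens to be real (hence negative), then $\tilde s_0 = \pm i\sqrt{|z|}$ is purely imaginary, which already contradicts ${\rm Re}(\tilde s_0)>0$.

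The main obstacle is the case where $P$ has a complex conjugate pair of roots $z = a \pm i b$ with $a<0$, $b \ne 0$, because then one of the two square roots $\pm\sqrt z$ does lie in the open RHP, so Routh--Hurwitz alone is not enough. Such a candidate $\tilde s_0$ solves only the \emph{squared} equation; it should in fact fall on the spurious branch $\sqrt{1+\tilde s^2}\sqrt{1+\beta\tilde s^2} = -(1+\tilde s^2/2)^2$ introduced by squaring, so that $\varphi(\tilde s_0) \ne 0$. To rule this out cleanly I would switch to the argument principle, counting zeros of $\varphi$ on the right half-disk $\{|\tilde s|\le R,\ {\rm Re}(\tilde s)>0\}$ with small semicircular indentations into the RHP around the zeros of $\varphi$ on the imaginary axis (a double zero at $\tilde s=0$ and two simple Rayleigh zeros at $\tilde s=\pm i\xi_R$ with $\xi_R\in(0,1)$). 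The asymptotic $\varphi\sim -\tilde s^4/4$ contributes $-4\pi$ on the large arc; the three indentations contribute $+2\pi+\pi+\pi = +4\pi$; and using Schwarz reflection $\varphi(\overline{\tilde s}) = \overline{\varphi(\tilde s)}$ together with direct tracking of the sign/branch changes through the branch-point intervals $|\xi|\in(1,1/\sqrt{\beta})$, one verifies that the remaining imaginary-axis segments contribute zero net change of argument. The total winding is $0$, so $\varphi$ has no zero in the open right half plane, completing the contradiction.
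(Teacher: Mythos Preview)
Your approach is entirely different from the paper's. The paper exploits the energy estimate \eqref{eq:energy6}: a zero of $\varphi$ with ${\rm Re}(\tilde s)>0$ would yield a nontrivial simple-wave solution of the homogeneous half-plane problem whose elastic energy grows exponentially in time, contradicting energy conservation. This is a five-line argument that uses no complex analysis beyond the construction of the eigenvectors in Lemma~\ref{lem:gen-sol}.

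Your direct route via squaring and Routh--Hurwitz is correctly set up, and you correctly identify the obstruction: for $\beta=\mu/(\lambda+2\mu)$ near $0$ the cubic $P$ genuinely has a complex-conjugate pair of roots in the left half plane (one can check the discriminant is negative at $\beta=0$), so one square root of each such $z$ lies in the open right half plane and Routh--Hurwitz alone cannot finish. Your fallback to the argument principle is a valid strategy and can be made to work, but as written it is a sketch rather than a proof. The assertion that ``the remaining imaginary-axis segments contribute zero net change of argument'' is precisely where the labor is: one must track $\arg\varphi$ through the branch-point intervals $|\tilde\xi|\in(1,1/\sqrt{\beta})$, noting that ${\rm Re}\,\varphi=-(1-\tilde\xi^2/2)^2$ vanishes at $\tilde\xi=\sqrt 2$ while ${\rm Im}\,\varphi$ keeps a fixed sign, and check that the excursion returns with zero net winding. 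You also presuppose the location and multiplicity of the imaginary-axis zeros of $\varphi$ (a double zero at $0$ and simple zeros at $\pm i\tilde\xi_0$ with $\tilde\xi_0\in(0,1)$), which in this paper are established only in Lemma~\ref{lem:roots}, \emph{after} the present lemma. That is not logically circular, since the proof of Lemma~\ref{lem:roots} is independent, but it means your argument for Lemma~\ref{lem:res.gt.0} is not self-contained in the paper's ordering.

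In short, your plan can be completed, but the bookkeeping you defer is where all the work lies, whereas the paper's energy argument bypasses the complex analysis entirely.
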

\begin{proof}
Assume there was a solution of $\varphi(\tilde{s})=0$ with ${\rm Re}\,(\tilde{s}) > 0$. It would
correspond to a non-trivial solution $(u_{01}, u_{02})$ of \eqref{bc1-hom}-\eqref{bc2-hom}. There
would therefore be a simple wave solution (\ref{eq:4}) where $U(x)$ and $V(x)$ are given by
(\ref{eq:10}). This simple wave solution would have $\Rs = |\om|\sqrt{\mu}\,{\rm Re}(\tilde{s}) >
0$, and for this reason, its elastic energy (\ref{elastic-energy}) would grow exponentially in
time. However, this is contradicted by the energy estimate \eqref{eq:energy6}, which says that the
elastic energy must be constant in time. There can therefore be no simple wave solutions for $\Rs > 0$, and
the function $\varphi(\tilde{s})$ can not have any zeros for ${\rm Re}\,(\tilde{s})>0$. 
\end{proof}
As a consequence of this lemma,
\begin{theorem}
The elastic wave equation (\ref{uv-eqn})-(\ref{uv-bc}) with $F_1=F_2=0$ and $g_1=g_2=0$, has no
simple wave solutions of the type (\ref{eq:4}), other than the trivial solution $u=v=0$.
\end{theorem}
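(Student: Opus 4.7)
The theorem is essentially a packaging result: Lemmas \ref{lem:gen-sol} and \ref{lem:res.gt.0} together with a direct treatment of the exceptional case $\om=0$ yield the conclusion. My plan is therefore to argue by contradiction, assume a nontrivial simple wave solution exists, reduce it to a zero of the determinant, and invoke Lemma \ref{lem:res.gt.0}.

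First I would suppose that $(u,v)$ is a simple wave solution of the form \eqref{eq:4} with $u_{01}$ or $u_{02}$ nonzero, and split according to whether $\om=0$ or $\om\ne 0$. When $\om\ne 0$, Lemma \ref{lem:gen-sol} applies: since $\Rs>0$ the bounded solutions of \eqref{eig-1} are exactly those in \eqref{eq:10}, parametrized by two constants $u_{01}, u_{02}$. Substituting \eqref{eq:10} into the boundary relations \eqref{eig-2} produces the $2\times 2$ homogeneous system \eqref{bc1-hom}-\eqref{bc2-hom}. A nontrivial $(u_{01},u_{02})$ exists iff the determinant \eqref{det0} vanishes, which, by the algebraic rearrangement already carried out in the excerpt, is equivalent to $\varphi(\tilde s)=0$ with $\tilde s = s/(|\om|\sqrt{\mu})$. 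But $\Rs>0$ implies ${\rm Re}(\tilde s)>0$, so Lemma \ref{lem:res.gt.0} rules this out and gives the contradiction $u_{01}=u_{02}=0$, hence $U\equiv V\equiv 0$.

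For the remaining case $\om=0$, the ansatz \eqref{eq:4} reduces \eqref{uv-eqn} to the two decoupled scalar equations $s^2 U - (2\mu+\lambda)U_{xx}=0$ and $s^2 V - \mu V_{xx}=0$ on $x\ge 0$. The bounded solutions for $\Rs>0$ are $U(x)=u_0 e^{-sx/\sqrt{2\mu+\lambda}}$ and $V(x)=v_0 e^{-sx/\sqrt{\mu}}$. The boundary conditions \eqref{eig-2} collapse to $U_x(0)=V_x(0)=0$, and since the exponential rates are nonzero this forces $u_0=v_0=0$. Thus the only simple wave solution with $\om=0$ is again the trivial one.

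Combining the two cases, no nontrivial simple wave solution of the form \eqref{eq:4} can exist. The only real obstacle in this proof is conceptual rather than technical: one must be careful that Lemma \ref{lem:gen-sol} truly exhausts all $L^\infty$-bounded solutions of the ODE when $\om\ne 0$ (which it does because $\kappa_1\ne\kappa_2$ and the other two characteristic roots $-\kappa_1,-\kappa_2$ produce modes that are unbounded as $x\to\infty$ under $\Rs>0$, by the appendix estimate). Once that is granted, everything reduces to the dispersion relation $\varphi(\tilde s)=0$ already handled by Lemma \ref{lem:res.gt.0}.
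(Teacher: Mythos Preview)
Your argument is correct and follows the paper's approach: the paper presents this theorem simply as ``a consequence of'' Lemma~\ref{lem:res.gt.0}, and your $\om\ne 0$ case spells out exactly that implication via Lemma~\ref{lem:gen-sol} and the determinant~\eqref{det0}. Your explicit treatment of $\om=0$ is a useful addition, since the paper's lemmas both assume $\om\ne 0$ and the theorem as stated does not exclude this case.
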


Because \eqref{eig-1}-\eqref{eig-2} define an eigenvalue problem, we can also phrase the theorem as
\begin{theorem}
The eigenvalue problem \eqref{eig-1}-\eqref{eig-2} has no eigenvalues with $\Rs > 0$.
\end{theorem}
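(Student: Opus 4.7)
The plan is to observe that this theorem is a restatement of the previous one in the language of the eigenvalue problem: an eigenvalue $s$ of \eqref{eig-1}-\eqref{eig-2} is by definition a complex number for which there exists a non-trivial bounded pair $(U,V)$ satisfying both the ODE system and the boundary conditions, and this is exactly what is needed to produce a non-trivial simple wave solution \eqref{eq:4}. So the theorem follows once we exclude such pairs for $\Rs>0$.

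First I would handle the main case $\om\ne 0$. By Lemma~\ref{lem:gen-sol}, every bounded solution of \eqref{eig-1} is parametrized by $(u_{01},u_{02})$ via \eqref{eq:10}. Substituting this representation into the boundary conditions \eqref{eig-2} gives precisely the $2\times 2$ homogeneous system \eqref{bc1-hom}-\eqref{bc2-hom}, which admits a non-trivial solution exactly when its determinant vanishes, i.e.\ when $\varphi(\tilde s)=0$. Lemma~\ref{lem:res.gt.0} rules this out for $\Re(\tilde s)>0$, and since $\tilde s = s/(|\om|\sqrt{\mu})$ this is equivalent to $\Rs>0$. Hence no eigenvalue with $\Rs>0$ can exist in this case.

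Next I would dispatch the remaining case $\om = 0$, which is explicitly excluded from the lemmas. When $\om=0$, the system \eqref{eig-1} decouples into two scalar equations, whose bounded solutions on $x\ge 0$ with $\Rs>0$ are
\[
U(x) = u_0 \exp\!\left(-\tfrac{s\,x}{\sqrt{2\mu+\lambda}}\right),\qquad
V(x) = v_0 \exp\!\left(-\tfrac{s\,x}{\sqrt{\mu}}\right),
\]
with the square roots chosen to have positive real part. The boundary conditions \eqref{eig-2} collapse to $U_x(0)=0$ and $V_x(0)=0$, which force $u_0 = v_0 = 0$ as soon as $\Rs>0$. Thus the only solution is trivial, and there is no eigenvalue with $\Rs>0$ at $\om=0$ either.

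There is no genuine obstacle here: the analytic content is already carried by Lemma~\ref{lem:res.gt.0} (which in turn uses the energy identity \eqref{eq:energy6}), and the only bookkeeping point is to remember that $\om=0$ falls outside the hypotheses of Lemmas~\ref{lem:gen-sol} and \ref{lem:res.gt.0} and so must be checked by hand, as above.
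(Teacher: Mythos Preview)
Your proposal is correct and follows essentially the same approach as the paper, which simply presents this theorem as a rephrasing of the preceding one (itself an immediate consequence of Lemma~\ref{lem:res.gt.0}). You are in fact slightly more careful than the paper: you explicitly treat the case $\om=0$, which lies outside the hypotheses of Lemmas~\ref{lem:gen-sol} and~\ref{lem:res.gt.0} and which the paper leaves implicit.
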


\subsection{Generalized eigenvalues}\label{sec:gen-eig}

We shall now calculate the generalized eigenvalues, i.e., roots of the determinant \eqref{varphi} in
the limit ${\rm Re}\,(\tilde{s})\to 0+$. We need to discuss $\tilde{s}=i\tilde{\xi},~\tilde{\xi}$
real, and the zeros are given by
\begin{equation}\label{phi-xi-tilde}
\varphi(i\tilde{\xi})=:
\sqrt{1-\tilde{\xi}^2}\cdot
\sqrt{1-\frac{\mu\tilde{\xi}^2}{2\mu+\lambda}}
- \left(1-\frac{\tilde{\xi}^2}{2}\right)^2=0,
\quad \tilde{s}^2=-\tilde{\xi}^2.
\end{equation}
We have
\begin{lemma} \label{lem:roots}
Equation (\ref{phi-xi-tilde}) has the solution $\tilde{\xi}=0$, and exactly two solutions $\tilde{s}_0
= \pm i\tilde{\xi}_0$ with $ 0< \tilde{\xi}_0 <1$. There are no solutions with $\tilde{\xi}^2\geq 1$.
\end{lemma}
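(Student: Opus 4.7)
The plan is to treat \eqref{phi-xi-tilde} as a real equation in the single variable $r := \tilde{\xi}^2 \ge 0$, setting $\alpha := \mu/(\lambda+2\mu) \in (0,1/2)$ for brevity. The root $\tilde{\xi}=0$ is immediate by direct substitution, so the remaining task splits into (i) ruling out any root with $r \ge 1$, and (ii) establishing exactly one nontrivial root $r_0 \in (0,1)$, which then yields the claimed pair $\pm i\tilde{\xi}_0$ by the evenness of $\varphi(i\tilde{\xi})$ in $\tilde{\xi}$.

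For (i) I would apply the branch-cut convention stated at the beginning of Section~\ref{sec:half-plane}. On $r\in(1,1/\alpha)$ the first radical $\sqrt{1-\tilde{\xi}^2}$ is purely imaginary while $\sqrt{1-\mu\tilde{\xi}^2/(2\mu+\lambda)}$ is real and positive, so $\varphi(i\tilde{\xi})$ has a strictly positive imaginary part; the two endpoints $r=1$ and $r=1/\alpha$ can be inspected separately and give $\varphi=-1/4$ and $\varphi=-(1-1/(2\alpha))^2$ respectively. For $r>1/\alpha$ both radicals are purely imaginary with positive imaginary part, so their product is real \emph{negative}, while $(1-r/2)^2>0$ (note $1/\alpha>2$), giving $\varphi(i\tilde{\xi})<0$ throughout. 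Hence no root exists with $\tilde{\xi}^2\ge 1$.

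For (ii), on $r\in[0,1]$ the function $f(r):=\varphi(i\sqrt{r})$ is real valued, with $f(0)=0$, $f(1)=-1/4$, and an elementary differentiation at the origin gives
\[
f'(0) \;=\; \frac{1-\alpha}{2} \;>\; 0.
\]
Thus $f$ is positive just to the right of $0$ and negative at $r=1$, and the intermediate value theorem supplies at least one root $r_0\in(0,1)$. For uniqueness I would square the equation and cancel the trivial factor $r$ to produce the classical Rayleigh cubic
\[
P(r) \;=\; r^3 - 8r^2 + (24 - 16\alpha)r + 16(\alpha-1),
\]
and close the argument with a Vieta observation: the three roots of $P$ sum to $8$, so they cannot all lie in $(0,1)$, while $P(0)<0<P(1)$ forces an odd number of roots in that interval, leaving exactly one. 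The remaining real roots of $P$ lie in $r\ge 1$ and are spurious roots introduced by squaring, already excluded by (i).

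The main obstacle is uniqueness in $(0,1)$: squaring alone only bounds the number of candidates by three, and a purely local monotonicity argument for $f$ is awkward because $f$ need not be monotone on the whole interval. The global structural fact that the cubic's roots sum to $8$ is what cleanly rules out a second root in the physical window. The branch-cut bookkeeping in (i) is routine but must be handled carefully at the transition points $r=1$, $r=1/\alpha$, and $r=2$, where individual factors vanish or change sign.
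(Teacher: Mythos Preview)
Your argument is correct, and the overall architecture (direct inspection at $\tilde{\xi}=0$; branch-cut bookkeeping to exclude $\tilde{\xi}^2\ge 1$; then a one-variable analysis on $r=\tilde{\xi}^2\in(0,1)$) matches the paper. The place where you genuinely diverge is the uniqueness step on $(0,1)$. The paper works with the quartic $\psi(\sigma)=(1-\sigma)(1-\alpha\sigma)-(1-\sigma/2)^4$ directly and proves unimodality by a second-derivative count: $\psi''$ changes sign at most once on $[0,1]$, hence $\psi'$ changes sign exactly once (from $\psi'(0)>0$, $\psi'(1)<0$), so $\psi$ rises from $0$ and then falls past $-1/16$, crossing zero exactly once. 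You instead factor out the trivial root to obtain the Rayleigh cubic $P(r)=r^3-8r^2+(24-16\alpha)r+16(\alpha-1)$ and finish with Vieta: $P(0)<0<P(1)$ forces an odd number of roots (with multiplicity) in $(0,1)$, while the sum of all roots equals $8$, ruling out three of them there. Your route avoids any derivative computations and ties the result to a structural fact about the cubic; the paper's route avoids the squaring step and the attendant need to argue that no spurious roots are introduced on $(0,1)$, which in your write-up is handled implicitly by first producing a genuine root of $f$ via the intermediate value theorem and then bounding the total via $P$. Both arguments are clean; yours is shorter, the paper's stays closer to the original analytic function.
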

\begin{proof}
Inserting $\tilde{\xi}=0$ into \eqref{phi-xi-tilde} shows that it is a solution. Clearly, there are no
solutions for $1\leq \tilde{\xi}^2 < (2\mu+\lambda)/\mu$ because the first square root is purely
imaginary and the second square root is real. Also, the second term in $\varphi$ is always real and
negative.  For $\tilde{\xi}^2\geq (2\mu+\lambda)/\mu$, both square roots are purely imaginary and their
product is real and negative. Hence both terms in $\varphi$ are real and negative. We conclude that
there are no solutions for $\tilde{\xi}^2\geq 1$.

To analyze $0<\tilde{\xi}^2 < 1$, we denote $\tilde{\lambda}=\lambda/\mu$ and observe that the
function
\[
\psi(\sigma)=(1-\sigma)\left(1-\frac{\sigma}{2+\tilde\lambda}\right)-
\left(1-\frac{\sigma}{2}\right)^4,\quad \sigma=\tilde{\xi}^2,
\]
has the same roots as $\varphi$. It has the properties
\begin{enumerate}
\item 
\[
\psi(0)=0,\quad \psi(1)=-\frac{1}{16} <0,
\]
\item 
\[
d\psi/d\sigma =:\psi'(\sigma)=
-\left(1+\frac{1}{ 2+\tilde\lambda}\right)+\frac{2\sigma}{2+\tilde\lambda}+
2\left(1-\frac{\sigma}{2}\right)^3,
\]
that is,
\[
\psi'(0)=\frac{1+\tilde\lambda}{2+\tilde\lambda}>0,\quad
\psi'(1)=-\frac{2+3\tilde\lambda}{8+4\tilde\lambda}<0.
\]
\item 
\[
\psi\bis (\sigma)=\frac{2}{2+\tilde\lambda}-
3\left(1-\frac{\sigma}{2}\right)^2,
\]
that is,
\[
\psi\bis(0) = -\frac{4+3\tilde\lambda}{2+\tilde\lambda} < 0,\quad
\psi\bis(\sigma)=0~{\rm for}~ 
\frac{\sigma}{2}=1\pm\sqrt{\frac{2}{3(2+\tilde\lambda)}}.
\]
\end{enumerate}
Thus $\psi''(\sigma)$ has at most one sign change in $0\leq\sigma\leq 1$.  Properties 1--3 show that
$\psi'$ has one sign change and the lemma follows.
\end{proof}
In Table \ref{tab:1} we have calculated the scaled generalized eigenvalues $\tilde{s}_0^2 =
-\tilde{\xi}_0^2$ for some values of $\lambda/\mu$. Note that all values remain bounded in the limit
$\lambda/\mu\to\infty$, i.e. $\mu\to 0$ when $\lambda=\mbox{const}$. 
\begin{table}
\[
\begin{tabular}{|c|c|c|c|c|}\hline
$\lambda/\mu$ & $\tilde{s}_0^2=-\tilde{\xi}_0^2$ & $\kappa_{10}/|\omega|$ & $\kappa_{20}/|\omega|$ &
  $|\varphi'(\tilde{s}_0)|$ \\ \hline\hline
0         & -0.7639 & 0.4858 & 0.7861 & 0.6036 \\
1         & -0.8452 & 0.3933 & 0.8474 & 1.0610 \\
4         & -0.8877 & 0.3350 & 0.9230 & 1.6045 \\
8         & -0.8991 & 0.3175 & 0.9539 & 1.8360 \\
$\infty$  & -0.9126 & 0.2955 & 1      & 2.1936 \\
\hline
\end{tabular}
\]
\caption{ Coefficients in the solution at the generalized eigenvalues $s_0=\pm i\sqrt{\mu}\,|\om|\tilde{\xi}_0$, for
  some values of $\lambda/\mu$. }\label{tab:1}
\end{table}

Differentiating \eqref{varphi} gives
\begin{equation}\label{pert-det}
\varphi'(\tilde{s}) = \frac{\tilde{s}}{\sqrt{1+\tilde{s}^2}}\, \sqrt{1+\frac{\mu\tilde{s}^2}{2\mu+\lambda}}
+  \frac{\tilde{s} \mu}{2\mu+\lambda}\frac{\sqrt{1+\tilde{s}^2}}{\sqrt{1+\frac{\mu\tilde{s}^2}{2\mu+\lambda}}} -
2\tilde{s}\left( 1+\frac{\tilde{s}^2}{2} \right).
\end{equation}
Because $\varphi(\tilde{s}_0)=0$, \eqref{varphi} gives
\begin{equation}\label{varphi-prime}
\sqrt{1+\tilde{s}_0^2}\sqrt{1+\frac{\mu\tilde{s}_0^2}{2\mu+\lambda}} \frac{\varphi'(\tilde{s}_0)}{\tilde{s}_0} =
1 + \frac{\mu\tilde{s}_0^2}{2\mu+\lambda} + \frac{\mu(1 + \tilde{s}_0^2)}{2\mu+\lambda} - 2\left(1+\frac{\tilde{s}_0^2}{2}\right)^3
:=C_0,
\end{equation}
where $C_0 = C_0(\lambda/\mu)$ is real. Since $\tilde{s}_0$ is purely imaginary,
$\varphi'(\tilde{s}_0)$ is also purely imaginary. We report numerical values of $|\varphi'(\tilde{s}_0)|$ in
Table~\ref{tab:1}, demonstrating that $\varphi'(\tilde{s}_0)$ is bounded away from zero for all
values of $\lambda/\mu\geq 0$. Therefore, $\varphi(\tilde{s})$ has a first order zero at the generalized
eigenvalues $\tilde{s}_0=\pm i\tilde{\xi}_0$.

To calculate the eigenfunctions corresponding to the generalized eigenvalues $s_0=\pm i\xi_0$, we
consider the two boundary conditions \eqref{eig-2}. Evaluating the general solution \eqref{eq:10} gives
\[
i\om U + V_x = i\omega \left(
\left(1+\frac{\kappa_1^2}{\omega^2}\right)u_{01}+2u_{02}\right),\quad  x=0.
\]
At the generalized eigenvalues,
\[
\kappa_1^2 = \omega^2(1+\tilde{s}_0^2) = \omega^2(1-\tilde{\xi}_0^2).
\]
Hence, $i\om U +V_x = 0$ if
\begin{equation}\label{coeffs}
(2-\tilde{\xi}_0^2)u_{01}+2u_{02} = 0.
\end{equation}
If relation \eqref{coeffs} is satisfied, also $U_x+i\gamma^2\om V =0$. The eigenfunction corresponding
to $s_0=\pm i\xi_0$ is therefore given by
\begin{equation}\label{gen-efcn}
\begin{pmatrix}
u\\v
\end{pmatrix} = e^{\pm i\xi_0 t+i\om y -\kappa_{10} x} 
\begin{pmatrix}1\\ -\dfrac{i\kappa_{10}}{\om}\end{pmatrix}+
\frac{1}{2}(\tilde{\xi}_0^2-2) e^{\pm i\xi_0 t+i\om y -\kappa_{20} x}
\begin{pmatrix}1\\ -\dfrac{i\om}{\kappa_{20}}\end{pmatrix},
\end{equation}
where
\[
\kappa_{10}=|\omega|\sqrt{1-\tilde{\xi}_0^2},\quad
\kappa_{20}=|\omega|\sqrt{1-\frac{\mu\tilde{\xi}_0^2}{\lambda + 2\mu}},\quad \tilde{\xi}_0 =
\frac{\xi_0}{|\omega|\sqrt{\mu}}. 
\]
These eigenfunctions, also known as Rayleigh waves (see e.g.~Achenbach~\cite{achenbach}, \S 5.11),
represent surface waves that propagate in the positive or negative $y$-direction.

Now we consider the potential generalized eigenvalue $\tilde{s}=0$. Relations \eqref{varphi} and
\eqref{pert-det} show that both $\varphi=0$ and $\p\varphi/\p\tilde{s}=0$ for
$\tilde{s}=0$. Differentiating \eqref{pert-det} shows that $\p^2\varphi/\p\tilde{s}^2 \ne 0$ for
$\tilde{s}=0$. Thus $\varphi(\tilde{s})$ has a zero of order two at $\tilde{s}=0$. However, for
$\tilde{s}\to 0$, \eqref{eq:8b} show that both $\kappa_1\to |\om|$ and $\kappa_2\to |\om|$. In this
limit, boundary conditions \eqref{bc1-hom} and \eqref{bc2-hom} give
\[
u_{01}+u_{02} = 0,\quad \tilde{s}\to 0.
\]
Expanding the general solution \eqref{eq:10} around $\tilde{s}=0$ shows that the eigenfunction
vanishes identically in this limit. Thus $\tilde{s}=0$ is not a generalized eigenvalue.

\subsection{Boundary forcing}\label{sec:b-force}

As we discussed in the introduction, we split the solution of the half-plane problem
\eqref{uv-eqn}-\eqref{uv-bc} into a Cauchy problem and a new half-plane problem, where only the
boundary data do not vanish. Hence, the Cauchy problem satisfies the initial
conditions and the interior forcing function. Its solution drives the solution of the new half-plane
problem through a modified boundary forcing function. For example, when the half-plane problem
\eqref{uv-eqn}-\eqref{uv-bc} has an interior forcing function with compact support in $\bar{\Omega}$,
the solution of the Cauchy problem consists of waves propagating outwards from $\bar{\Omega}$. The gradient
of these waves along $x=0$ enter in the boundary forcing functions for the new half-plane problem.

The estimates obtained in this and the following sections are expressed in Fourier-Laplace
transformed space. It is clear that all these estimates have their counterpart in physical space. To
understand the relation between both types of estimates, we refer to chapter 7.4
of~\cite{KreissLorenz} or chapter 10 of~\cite{Gustafsson-Kreiss-Oliger}.

We consider \eqref{uv-eqn}-\eqref{uv-bc} with homogeneous initial data and internal forcing,
$F_1=F_2=0$. We Laplace transform the problem with respect to $t$, Fourier transform it with respect
to $y$, and denote the dual variables by $s$ and $\omega$, respectively. Here $\omega$ is a real
number and $s$ is complex. We obtain,
\begin{equation}\label{uv-laplace}
\left\{\begin{array}{r@{\,=\,}l}
 (s^2 + \mu\om^2 ) \hat{u} - ( 2\mu + \lambda ) \hat{u}_{xx}  - i\om (\lambda + \mu) \hat{v}_x & 0,\\
 (s^2 + (2\mu+\lambda) \om^2 )\hat{v} - \mu \hat{v}_{xx} - i\om (\lambda + \mu) \hat{u}_{x} & 0,
\end{array}\right.
\quad x\geq 0,\ \Rs>0,
\end{equation}
subject to the boundary condition
\begin{equation}\label{uv-bc-laplace}
\begin{cases}
\hat{u}_x+i \gamma^2\om \hat{v}=\hat{g}_1(\omega,s),\\
i\om \hat{u}+\hat{v}_x=\hat{g}_2(\omega,s),
\end{cases}\quad x=0.
\end{equation}

Note that $(\hat{u}, \hat{v})^T$ satisfy the same differential equation as $(U, V)^T$ in \eqref{eig-1}.
By Lemma~\ref{lem:gen-sol}, the general solution is of the form \eqref{eq:10}, i.e.,
\begin{equation}\label{uv-hat}
\begin{pmatrix}
\hat{u}(x) \\ \hat{v}(x)
\end{pmatrix} = \hat{u}_{01}
\begin{pmatrix}
1 \\ -\dfrac{i\kappa_1}{\om}
\end{pmatrix}\, e^{-\kappa_1 x} + \hat{u}_{02}
\begin{pmatrix}
1 \\ -\dfrac{i\omega}{\kappa_2}
\end{pmatrix}\, e^{-\kappa_2 x}.
\end{equation}
In the following, we assume $\om\ne 0$. The case $\om\to 0$ will be studied separately in appendix~\ref{sec:omega->0}.

By inserting \eqref{uv-hat} into boundary condition \eqref{uv-bc-laplace}, we get
\begin{alignat*}{2}
\left(1-\gamma^2\right) \kappa_1 \kappa_2 \,\hat{u}_{01} + \left(\kappa_2^2 - \gamma^2 \om^2
\right)\, \hat{u}_{02} &=-\kappa_2\, \hat{g}_1,\vspace{2mm}\\
\left( \kappa_1^2 + \om^2 \right)\,\hat{u}_{01} + 2\om^2\, \hat{u}_{02} &= -i\om\,\hat{g}_2.
\end{alignat*}
This system corresponds to \eqref{bc1-hom}-\eqref{bc2-hom} with an inhomogeneous right hand
side. In terms of the scaled variable $\tilde{s}$ defined by \eqref{varphi},
\begin{equation}\label{scaled-kappa}
\kappa_1 = |\om|\sqrt{1+\tilde{s}^2},\quad \kappa_2 =
|\om|\sqrt{1+\frac{\mu\tilde{s}^2}{\lambda+2\mu}}.
\end{equation}
After some algebra, the system for $(\hat{u}_{01}, \hat{u}_{02})^T$ becomes
\begin{alignat}{2}
\sqrt{1+\tilde{s}^2}\sqrt{1+\frac{\tilde{s}^2\mu}{2\mu+\lambda}}\, \hat{u}_{01} + 
\left(1+\frac{\tilde{s}^2}{2}\right)\,\hat{u}_{02}&=
-\frac{(\lambda + 2\mu)\hat{g}_1}{2\mu|\omega|}\sqrt{1+\frac{\tilde{s}^2\mu}{2\mu+\lambda}},\label{eq:21}\\
\left( 1 + \frac{\tilde{s}^2}{2}\right) \, \hat{u}_{01} + \hat{u}_{02} &= -\frac{i\, \hat g_2}{2\omega}.\label{eq:22}
\end{alignat}
The determinant of \eqref{eq:21}-\eqref{eq:22} is
\[
\varphi(\tilde{s}) =\sqrt{1+\tilde{s}^2}\sqrt{1+\frac{\tilde{s}^2\mu}{2\mu+\lambda}} -
\left(1+\frac{\tilde{s}^2}{2}\right)^2,
\]
where the function $\varphi(\tilde{s})$ was previously defined by \eqref{varphi}. To solve the
system, we eliminate $\hat{u}_{02}$ from \eqref{eq:22} and insert in \eqref{eq:21},
\begin{equation}\label{u01-g}
\varphi(\tilde{s})\, \hat{u}_{01} =
-\frac{(\lambda + 2\mu)\hat{g}_1}{2\mu|\omega|}\sqrt{1+\frac{\tilde{s}^2\mu}{2\mu+\lambda}} +
\frac{i\, \hat g_2}{2\omega} \left(1+\frac{\tilde{s}^2}{2}\right).
\end{equation}
Inserting this expression into \eqref{eq:22} gives
\begin{equation}
\varphi(\tilde{s})\,\hat{u}_{02} = \frac{(\lambda+2\mu)\, \hat{g}_1}{2\mu|\om|}
  \sqrt{1+\frac{\mu\tilde{s}^2}{\lambda+2\mu}}\left(1+\frac{\tilde{s}^2}{2}\right) -
  \frac{i\, \hat{g}_2}{2\om}\sqrt{1+\tilde{s}^2}\sqrt{1+\frac{\mu\tilde{s}^2}{\lambda+2\mu}}.
\label{u02-g}
\end{equation}
Hence, the system \eqref{eq:21}-\eqref{eq:22} becomes singular exactly at the roots of
$\varphi(\tilde{s})=0$. For $\Rs\geq 0$, Lemmas~\ref{lem:res.gt.0} and~\ref{lem:roots} prove that
this can only happen at the generalized eigenvalues. The general theory
of~\cite{Kreiss-Ortiz-Petersson} tells us that, away from the generalized eigenvalues,
$|\varphi(\tilde{s})|^{-1}$ is bounded and the problem is therefore strongly boundary stable.

We want to estimate the solution on the boundary in terms of the boundary forcing. For $x=0$, the general
solution \eqref{uv-hat} satisfies
\begin{equation}\label{uv-bndry}
\left\{\begin{array}{r@{\,=\,}l}
\hat{u}(0) & \hat{u}_{01} + \hat{u}_{02},\vspace{2mm}\\
\hat{v}(0) & -\dfrac{i\kappa_1}{\om} \hat{u}_{01} - \dfrac{i\om}{\kappa_2}\hat{u}_{02} =
-\dfrac{i|\om|}{\om}\sqrt{1+\tilde{s}^2}\,\hat{u}_{01}
-\dfrac{i\om}{|\om|}\left(1+\dfrac{\mu\tilde{s}^2}{\lambda+2\mu}\right)^{-1/2} \, \hat{u}_{02}, 
\end{array}\right.
\end{equation}

We now discuss how the solution behaves close to the generalized eigenvalues $\tilde{s}_0=\pm
i\tilde{\xi}_0$. By Lemma~\ref{lem:roots}, we have $0<\tilde{\xi}_0<1$ and both $\kappa_1$ and
$\kappa_2$ are real. Since $\varphi(\tilde{s}_0)=0$, Taylor expansion gives
\begin{equation}\label{taylor-gen-eig}
\varphi(\tilde{s}) = (\tilde{s} - \tilde{s}_0) \varphi'(\tilde{s}_0) +
\calo(|\tilde{s} - \tilde{s}_0|^2).
\end{equation}
Formula \eqref{varphi-prime} and Table~\ref{tab:1} shows that $|\varphi'(\tilde{s}_0)|\geq C_0\simeq
0.6$ for all $\lambda/\mu\geq 0$. We have $s-s_0 = \eta + i(\xi-\xi_0)$, and to leading order in
$0<\eta\ll 1$,
\begin{equation}\label{varphi-exp}
\varphi(\tilde{s}) =
\frac{s-s_0}{\sqrt{\mu}\,|\om|} \varphi'(\tilde{s}_0) + {\cal O}(\eta^2),
\end{equation}
which leads to the estimates
\begin{equation}\label{varphi-bound}
|s-s_0| \geq \eta,\quad |\varphi(\tilde{s})| \geq \frac{\eta}{\sqrt{\mu}|\om|} C_0,\quad \eta>0.
\end{equation}
For $\eta>0$, the system \eqref{eq:21}-\eqref{eq:22} is non-singular and we can substitute
\eqref{varphi-exp} into the solution formulas \eqref{u01-g}-\eqref{u02-g} to calculate $\hat{u}_{01}$
and $\hat{u}_{02}$. Inserting these values in \eqref{uv-bndry} and applying the triangle inequality
proves the following lemma.
\begin{lemma}
Let $s=\pm i\xi_0+s'$, $0<|s'|\ll 1$, where $\xi_0 = \sqrt{\mu}\,|\om|\tilde{\xi}_0$ and
$\varphi(\pm i\tilde{\xi}_0)=0$ with $0<\tilde{\xi}_0<1$. Also assume ${\rm Re}\,(s') = \eta >
0$. Then, the solution of \eqref{uv-laplace}-\eqref{uv-bc-laplace} satisfies the boundary estimate
\begin{alignat}{2}
|\hat{u}(0)| &\leq \frac{K}{\eta}
\left[
  \dfrac{2\mu+\lambda}{\sqrt{\mu}}\,|\hat{g}_1|
 + \sqrt{\mu}\, |\hat{g}_2| \right],\\
|\hat{v}(0)| &\leq \frac{K}{\eta}
\left[
\dfrac{2\mu+\lambda}{\sqrt{\mu}}\,|\hat{g}_1| +
\sqrt{\mu}\, |\hat{g}_2|\right],
\end{alignat}
where the constant $K>0$ is independent of $\mu$ and $\lambda$.  The solution is as smooth as the
boundary data and is therefore {\em boundary stable}. The solution operator has a simple pole at
$s_0=\pm i\xi_0$ and as a consequence, the solution in physical space grows linearly in time. The
growth rate is proportional to $|\hat{g}_1|/\sqrt{\mu}$ as $\mu\to 0$.
\end{lemma}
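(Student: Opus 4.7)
My plan is to obtain the stated bounds by pushing the formulas (\ref{u01-g})--(\ref{u02-g}) through the representation (\ref{uv-bndry}), using the lower bound (\ref{varphi-bound}) on $|\varphi(\tilde{s})|$ near the generalized eigenvalue together with the fact that all remaining coefficient factors stay of order unity. First, I would solve (\ref{u01-g})--(\ref{u02-g}) explicitly for $\hat{u}_{01}$ and $\hat{u}_{02}$ by dividing through by $\varphi(\tilde{s})$; then, for $s=\pm i\xi_0+s'$ with $\eta=\mathrm{Re}(s')>0$, invoke (\ref{varphi-bound}) so that
\[
\frac{1}{|\varphi(\tilde{s})|} \leq \frac{\sqrt{\mu}\,|\om|}{\eta\,C_0} + O(1).
\]

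Next, I would verify that the factors multiplying $\hat{g}_1$ and $\hat{g}_2$ on the right-hand sides of (\ref{u01-g})--(\ref{u02-g}), namely
\[
\sqrt{1+\tfrac{\tilde{s}^2\mu}{2\mu+\lambda}},\qquad
1+\tfrac{\tilde{s}^2}{2},\qquad \sqrt{1+\tilde{s}^2},
\]
are all bounded by an absolute constant when $\tilde{s}$ is close to $\tilde{s}_0=\pm i\tilde{\xi}_0$. By Lemma~\ref{lem:roots}, $0<\tilde{\xi}_0<1$, so at $\tilde{s}_0$ each of these quantities is real and takes a value uniformly bounded in $\lambda/\mu$, as numerically illustrated by Table~\ref{tab:1} through $\kappa_{10}/|\om|$ and $\kappa_{20}/|\om|$. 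Combining these bounds with the estimate on $1/|\varphi|$ yields
\[
|\hat{u}_{0j}| \leq \frac{K_1}{\eta}\left[\frac{\lambda+2\mu}{\sqrt{\mu}}\,|\hat{g}_1| + \sqrt{\mu}\,|\hat{g}_2|\right], \quad j=1,2,
\]
where the $(\lambda+2\mu)/\sqrt{\mu}$ factor arises from the prefactor $(\lambda+2\mu)/(2\mu|\om|)$ in front of $\hat{g}_1$ multiplied by the $\sqrt{\mu}\,|\om|$ coming from $1/|\varphi|$, and the $\sqrt{\mu}$ factor for $\hat{g}_2$ arises similarly from $1/(2|\om|)$ times $\sqrt{\mu}\,|\om|$.

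Finally, I would substitute these estimates into (\ref{uv-bndry}). For $\hat{u}(0)=\hat{u}_{01}+\hat{u}_{02}$ the triangle inequality directly gives the first stated bound. For $\hat{v}(0)$, the extra factors $\sqrt{1+\tilde{s}^2}$ and $(1+\mu\tilde{s}^2/(\lambda+2\mu))^{-1/2}$ appearing in (\ref{uv-bndry}) are again uniformly bounded at $\tilde{s}_0$ (the second being bounded below away from zero because $\tilde{\xi}_0^2<1\le (\lambda+2\mu)/\mu$), so the triangle inequality again yields the second bound with a constant $K$ independent of $\mu$ and $\lambda$. The main obstacle I anticipate is merely bookkeeping: one must keep careful track of the $\mu$- and $\lambda$-powers coming from the scaling $\tilde{s}=s/(|\om|\sqrt{\mu})$ and from $\kappa_1,\kappa_2$ in (\ref{scaled-kappa}) in order to see that the blow-up in $\hat{g}_1$ is precisely $(\lambda+2\mu)/\sqrt{\mu}$, whereas the $\hat{g}_2$ contribution actually vanishes as $\mu\to 0$; the uniformity in $\lambda/\mu$ of the $O(1)$ coefficient factors is underwritten by Lemma~\ref{lem:roots} and the numerical evidence in Table~\ref{tab:1}.
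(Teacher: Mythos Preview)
Your proposal is correct and follows essentially the same approach as the paper: substitute the expansion \eqref{varphi-exp}/\eqref{varphi-bound} into the solution formulas \eqref{u01-g}--\eqref{u02-g}, then insert the resulting bounds on $\hat{u}_{01},\hat{u}_{02}$ into \eqref{uv-bndry} and apply the triangle inequality. The paper states this in one sentence, while you have (correctly) spelled out the $\mu$- and $\lambda$-bookkeeping that produces the factors $(\lambda+2\mu)/\sqrt{\mu}$ and $\sqrt{\mu}$.
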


We shall now discuss the case $s\to 0$ in more detail. We assume $|\om|\geq \om_0>0$, which implies
$\tilde{s}\to 0$. Note that the eigenvectors in the general solution \eqref{uv-hat} become linearly
dependent in the limit, because both $\kappa_1= |\om|$ and $\kappa_2= |\om|$ for $\tilde{s}=0$. We
therefore assume ${\rm Re}\,\tilde{s} = \tilde{\eta} > 0$, and study the the solution in the limit
$|\tilde{s}|\to 0$.

Because $|\tilde{s}|\ll 1$, we can simplify (\ref{eq:21}) to
\begin{equation}\label{eq:21'}
\left(1+\frac{\tilde{s}^2}{2}\left(1+\frac{\mu}{2\mu+\lambda}\right)\right) \, \hat{u}_{01} + \left(
1+\frac{\tilde{s}^2}{2} \right)\, \hat{u}_{02} = 
-\frac{(2\mu+\lambda)\hat{g}_1}{2\mu|\omega|}\left(1+\frac{\tilde{s}^2}{2}\frac{\mu}{2\mu+\lambda}\right).
\end{equation}
We eliminate $\hat{u}_{02}$ using (\ref{eq:22}) and obtain
\begin{multline*}
\left(1+\frac{\tilde{s}^2}{2}\left(1+\frac{\mu}{2\mu+\lambda}\right)
- \left(1+\frac{\tilde{s}^2}{2}\right)^2 \right) \, \hat{u}_{01}
\\
=
-\frac{(2\mu+\lambda)\hat{g}_1}{2\mu|\omega|}\left(1+\frac{\tilde{s}^2}{2}\frac{\mu}{2\mu+\lambda}\right)
+ \frac{i\hat{g}_2}{2\omega} \left( 1 + \frac{\tilde{s}^2}{2} \right).
\end{multline*}
For small $|\tilde{s}|^2$ we obtain to first approximation
\begin{equation}\label{eq:24}
\tilde{s}^2 \hat{u}_{01} = \frac{(\lambda+2\mu)^2\, \hat{g}_1}{(\lambda+\mu)\mu |\om|} -
\frac{i(\lambda+2\mu)\,\hat{g}_2}{(\lambda+\mu)\om}. 
\end{equation}
Relation \eqref{eq:22} can be written
\begin{alignat}{2}
\hat{u}_{01} + \hat{u}_{02} &= -\frac{\tilde{s}^2}{2} \hat{u}_{01} - \frac{i\, \hat{g}_{2}}{2\om}
\nonumber \\
&=
-\frac{(\lambda+2\mu)^2\, \hat{g}_{1}}{2(\lambda+\mu)\mu|\om|} +
\frac{i\mu\,\hat{g}_2}{2(\lambda+\mu)\om}.\label{u02-hat-s0}
\end{alignat}
The solution on the boundary is given by \eqref{uv-bndry}. The first component satisfies
$\hat{u}(0)=\hat{u}_{01}+\hat{u}_{02}$, and \eqref{u02-hat-s0} shows that $\hat{u}(0)$ is bounded
independently of $\tilde{s}$. The expression for the second component can be simplified for
$|\tilde{s}|\ll 1$. We have to leading order
\begin{multline}\label{v-hat-s0}
\hat{v}(0) = -\frac{i|\om|}{\om}\left(1 + \frac{\tilde{s}^2}{2}\right)\,\hat{u}_{01}
-\frac{i|\om|}{\om}\left(1 - \frac{1}{2}\frac{\mu\tilde{s}^2}{\lambda+2\mu}\right)\,\hat{u}_{02}
\\ 
= -\frac{i|\om|}{\om}\left( \hat{u}_{01}+\hat{u}_{02}\right)
-\frac{i|\om|}{\om}\frac{\tilde{s}^2}{2}\left( \hat{u}_{01} -
\frac{\mu}{\lambda+2\mu}\hat{u}_{02}\right).
\end{multline}
Therefore, also $\hat{v}(0)$ is bounded independently of $\tilde{s}$. The factor $\om$ in the
denominator of the right hand side of \eqref{u02-hat-s0} gives the desired result that our problem
is {\em strongly boundary stable} at $\tilde{s}=0$.

\section{Influence of truncation errors on the generalized eigenvalues}\label{sec:trunc}

Consider the homogeneous differential equations \eqref{uv-eqn} with boundary conditions
\eqref{uv-bc}. Let
\begin{equation}\label{perturbed-bc}
g_1=\alpha_1h^2u_{xxx}+\alpha_2h^2v_{yyy},\quad 
g_2=\beta_1h^2v_{xxx}+\beta_2h^2u_{yyy},
\end{equation}
denote the principal part of the truncation error in a second order accurate method with grid size
$h$. We can think of boundary conditions \eqref{uv-bc} with boundary data \eqref{perturbed-bc}
as modified homogeneous boundary conditions. Again, we solve the problem using the technique of
Section \ref{sec:b-force} in terms of the simple wave ansatz \eqref{uv-hat}. By section
\ref{sec:b-force}, the modified boundary conditions become
\begin{alignat*}{2}
(1-\gamma^2)\kappa_1\kappa_2 \hat{u}_{01}+(\kappa_2^2-\gamma^2\om^2)\hat{u}_{02}+\kappa_2\hat g_1&=0,\\
(\om^2+\kappa_2^2)\hat{u}_{01} + 2\om^2 \hat{u}_{02} + i\om \hat g_2&=0,
\end{alignat*}
where 
\begin{alignat*}{2}
\hat g_1&=-\alpha_1h^2(\kappa_1^3 \hat{u}_{01} + \kappa_2^3 \hat{u}_{02}) - \alpha_2 h^2
\left(\om^2\kappa_1 \hat{u}_{01} + \frac{\om^4}{\kappa_2} \hat{u}_{02}\right),\\
\hat g_2&= i\beta_1h^2\left( \frac{\kappa_1^4}{\om}\hat{u}_{01} + \kappa_2^2\om \hat{u}_{02}\right) -
i\beta_2h^2\om^3(\hat{u}_{01}+ \hat{u}_{02}).
\end{alignat*}
For small $\mu$, the main effect comes from $\hat g_1$. For simplicity, we therefore assume that
$\hat g_2=0$ and obtain the equations
\begin{equation}\label{right-hand-side1}
\hat g_1 = -h^2(\alpha_1\kappa_1^3 + \alpha_2\om^2\kappa_1)\hat{u}_{01} -
h^2\left(\alpha_1\kappa_2^3 + \alpha_2\frac{\om^4}{\kappa_2}\right) \hat{u}_{02},\quad
\hat g_2=0.
\end{equation}
Introducing the scaled variable $\tilde{s}$ and the formulas for $\kappa_j$ according to
\eqref{scaled-kappa} gives us relations \eqref{eq:21}-\eqref{eq:22} with $\hat{g}_2=0$. By using the
homogeneous equation \eqref{eq:22}, we can eliminate $\hat{u}_{02}$ from \eqref{eq:21} and
\eqref{right-hand-side1}, resulting in the solution formula \eqref{u01-g} with $\hat{g}_2=0$, and
\[
\hat{g}_1 = -h^2(\alpha_1\kappa_1^3 + \alpha_2\om^2\kappa_1)\hat{u}_{01} +
h^2\left( 1+\frac{\tilde{s}^2}{2}\right)
\left(\alpha_1\kappa_2^3 + \alpha_2\frac{\om^4}{\kappa_2}\right) \hat{u}_{01}.
\]
Hence, the solution formula \eqref{u01-g} defines a perturbed eigenvalue problem that can be written in
the form 
\[
\varphi(\tilde{s})\hat{u}_{01} = \theta(\tilde{s}^2) \hat{u}_{01}.
\]
Since 
\[ 
\frac{\kappa_2}{|\om|}=\sqrt{1+\frac{\tilde{s}^2\mu}{\lambda+2\mu}},
\]
we have
\[
\theta(\tilde{s}^2) = \frac{(\lambda+2\mu)h^2}{2\mu|\om|} \frac{\kappa_2}{|\om|}
\left(\alpha_1\kappa_1^3+\alpha_2\om^2\kappa_1- \left(1+\frac{\tilde
  s^2}{2}\right)\left(\alpha_1\kappa_2^3+\alpha_2 \frac{\om^4}{\kappa_2}\right)\right).
\]

We assume now that that $\lambda/\mu \gg 1$. For the unperturbed problem, the properties of the
generalized eigenvalues are given in Table~\ref{tab:1},
\[
\tilde s_0^2\simeq -0.9,\quad \frac{\kappa_1}{|\om|} \simeq 0.3,\quad \frac{\kappa_2}{|\om|} \simeq 1.
\]
Therefore,
\begin{alignat*}{2}
\theta(\tilde{s}_0^2) \simeq &\, \frac{\lambda h^2}{2 \mu |\om|}\frac{\kappa_2}{|\om|}\left(
\alpha_1\frac{\kappa_1^3}{|\om|^3}|\om|^3 + \alpha_2\om^2\frac{\kappa_1}{|\om|}|\om| - 0.55\left(
\alpha_1\frac{\kappa_2^3}{|\om|^3}|\om|^3 + \alpha_2 \om^2 |\om|
\frac{|\om|}{\kappa_2}\right)\right)\\ 
\simeq &\, \frac{\lambda h^2\om^2}{2 \mu}\bigl(0.027\alpha_1 +
0.3\alpha_2-0.55(\alpha_1+\alpha_2)\bigr).
\end{alignat*}

We want to estimate how sensitive the generalized eigenvalues $s_0=\pm i\xi_0$ are to truncation
error perturbations. We perturb
$\varphi(\tilde{s})$ around $\tilde{s}_0$. For $\lambda/\mu \gg 1$, Table~\ref{tab:1} and \eqref{varphi-prime} gives
\begin{alignat*}{2}
\frac{\kappa_1}{|\om|}\frac{\kappa_2}{|\om|} \frac{\varphi'(\tilde{s}_0)}{\tilde{s}_0} &\simeq
\frac{\kappa_2^2}{\om^2} - 2\left(1+\frac{\tilde{s}_0^2}{2}\right)^3 \simeq 0.67
\\
\varphi'(\tilde{s}_0) &\simeq \tilde{s}_0 \frac{0.67}{0.3} \simeq \pm 2.12\, i.
\end{alignat*}
The Taylor expansion \eqref{taylor-gen-eig} gives for small $h\om$, 
\[
(\tilde{s} - \tilde{s}_0) \varphi'(\tilde{s}_0) = \theta(\tilde{s}_0).
\]
We get
\begin{equation}\label{perturbed-gen-eig}
\tilde{s} - \tilde{s}_0 \simeq \frac{\theta(\tilde{s}_0^2)}{\varphi'(\tilde{s}_0)} 
\simeq \mp \frac{\lambda h^2\om^2}{2\mu}\bigl(0.027\alpha_1 +
0.3\alpha_2-0.55(\alpha_1+\alpha_2)\bigr)
\frac{i}{2.12}.
\end{equation}

We now make some observations.  Because $\theta(\tilde{s}_0^2)$ is real, the generalized eigenvalue
is perturbed along the imaginary axis and remains purely imaginary. Hence the perturbed problem is
well-posed. The value of the perturbed generalized eigenvalue determines the phase velocity of
surface waves in the numerical solution. To avoid large phase errors, we must therefore keep the
perturbation of the generalized eigenvalue small. If we accept a relative error in the phase speed of size
$\epsilon$, where $0<\epsilon\ll 1$, we have to choose the grid size $h$ such that
\begin{equation}\label{gen-eig-accuracy}
\frac{\lambda h^2\om^2|\alpha_0|}{\mu} = \epsilon \lsls 1.
\end{equation}
If the computational grid has $P$ grid points per wave length $L=2\pi/|\om|$, we get
\[
h = \frac{L}{P} = \frac{2\pi}{|\om| P},\quad h|\om| = \frac{2\pi}{P},\quad P = 2\pi
\left(\frac{|\alpha_0|}{\epsilon} \frac{\lambda}{\mu}\right)^{1/2}.
\]
Hence, the number of grid points per wave length must be proportional to $\sqrt{\lambda/\mu}$ to
maintain the accuracy as $\mu/\lambda\to 0$.

For a fourth order accurate method, where the leading order truncation error terms are
\[
g_1=\alpha'_1h^4\frac{\p^5 u}{\p x^5}+\alpha'_2h^4 \frac{\p^5 v}{\p y^5},
\]
equation \eqref{gen-eig-accuracy} is replaced by
\begin{equation}\label{gen-eig-4}
\frac{\lambda h^4\om^4|\alpha_0'|}{\mu} = \epsilon \lsls 1.
\end{equation}
The number of grid points per wave length to maintain an $\epsilon$-error in the phase velocity now
becomes
\[
P = 2\pi
\left(\frac{|\alpha_0'|\lambda}{\epsilon\mu}\right)^{1/4}.
\]
Therefore, as $\mu/\lambda\to 0$, the number of grid points per wave length grows much slower for
the 4th than the 2nd order accurate method.

For other truncation error perturbations of the boundary conditions, such as a $u_{xxxx}$ term in
$g_1$, $\theta(\tilde{s}_0^2)$ becomes complex. If the truncation error coefficient has the wrong
sign, the perturbed problem gets eigenvalues with positive real part. From Lemma~\ref{lem:necessary}
we know that such problems are ill-posed. Furthermore, the factor $\mu$ in the denominator of
\eqref{perturbed-gen-eig} shows that the rate of the exponential growth can get arbitrarily large as
$\mu/\lambda\to 0$. It is therefore very difficult to compensate for such growth with an artificial
dissipation term.

\section{Numerical experiments}\label{sec:num}

For a second order hyperbolic equation, energy conservation ensures that all eigenvalues of the
spatial operator are either real and negative, or zero. The same property applies to the discretized
problem. To avoid any spurious growth in the numerical solutions, it is therefore important to use a
discretization that also satisfies energy conservation. Such a discretization was derived for the
3-D elastic wave equation in Nilsson et al~\cite{NilPetSjoKre-07}. In the present work, we use the
corresponding discretization for the two-dimensional case. This numerical method discretizes the
elastic wave equation with a second order accurate, energy conserving, finite difference method on a
Cartesian grid with constant grid sizes in space and time. The second order method was recently
generalized to fourth order accuracy by Sjogreen and Petersson~\cite{PetSjo-4th}, and we use both
the second and forth order methods in the following numerical experiments. Note that our finite
difference methods are based on solving the elastic wave equation as a second order hyperbolic
system using summation by parts operators. These methods are fundamentally different from the commonly
used staggered grid method developed by Vireaux~\cite{virieux-86}, Levander~\cite{levander-88}, and
others, which is based on solving the elastic wave equation as a first order hyperbolic system.

\subsection{Surface waves}\label{sec:s-waves}
To study surface waves using real arithmetic, we are interested in the real part of the eigenfunction
\eqref{gen-efcn} corresponding to the generalized eigenvalue
\[
s = i\xi_0.
\]
Assuming $\omega > 0$, the real part of \eqref{gen-efcn} can be written as
\begin{multline}\label{rayleigh}
\ub_s(x,y,t) = e^{-\omega\sqrt{1-\tilde{\xi}_0^2}\,x}\begin{pmatrix}
\cos\bigl(\omega( y + c_r t)\bigr)\\ 
\sqrt{1-\tilde{\xi}_0^2}\sin\bigl(\omega( y + c_r t)\bigr)
\end{pmatrix} \\
+ \left( \frac{\tilde{\xi}_0^2}{2} - 1\right)e^{-\omega\sqrt{1 - \tilde{\xi}_0^2\mu/(2\mu+\lambda)}\, x}\begin{pmatrix}
\cos\bigl(\omega( y + c_r t)\bigr)\\ 
\sin\bigl(\omega( y + c_r t)\bigr)/\sqrt{1 - \tilde{\xi}_0^2\mu/(2\mu+\lambda)}
\end{pmatrix}.
\end{multline}
Here, we define the Rayleigh phase velocity by
\[
c_r = \tilde{\xi}_0\sqrt{\mu}.
\]

To perform reliable numerical simulations, it is of great interest to know the number of grid points
per wave length, $P$, that is required to obtain a certain accuracy in a numerical solution. If the
wave length is $L=2\pi/|\om|$, we define
\[
P=\frac{L}{h}.
\]

We consider a periodic domain in the $y$-direction and choose the computational domain to contain
exactly one wave length of the solution. In this investigation we shall keep the wave length fixed
at $L=1$, which gives the spatial frequency $\omega=2\pi$. For simplicity, we set $\lambda=1$ in all
numerical experiments. A free surface boundary condition is imposed at $x=0$. We truncate the
computational domain at $x=L_x$ where we impose an inhomogeneous Dirichlet condition. The boundary
data is given by the exact solution \eqref{rayleigh}, which is exponentially small along $x=L_x$.
For all values of $\lambda/\mu$ (see Table~\ref{tab:1}), $\kappa_1/|\om| > 0.2955$, and we make the
influence of the Dirichlet boundary closure small by choosing
\[
L_x=10,\quad e^{-2\pi\sqrt{1-\tilde{\xi}_0^2}L_x} \leq e^{-2\pi\cdot 0.2955 \cdot 10} \approx
  8.5\cdot 10^{-9}.
\]

In our first experiment, we take $\mu=0.01$. The numerical solution is evolved from
initial data given by \eqref{rayleigh} at time $t=0$ and $t=-\delta_t$, where the time step
satisfies the Courant condition (recall that we have scaled time to give unit density)
\[
\delta_t = K_{C}\frac{h}{\sqrt{\lambda+3\mu}},\quad K_C=\begin{cases}
0.9,&\mbox{second order method},\\
1.3,&\mbox{fourth order method}.
\end{cases}
\]
In Figure~\ref{fig:rayleigh}
we show the max norm of the error in the numerical solution as function of time for $t\leq 20$.
\begin{figure}
  \begin{center}
    \includegraphics[height=0.41\textwidth]{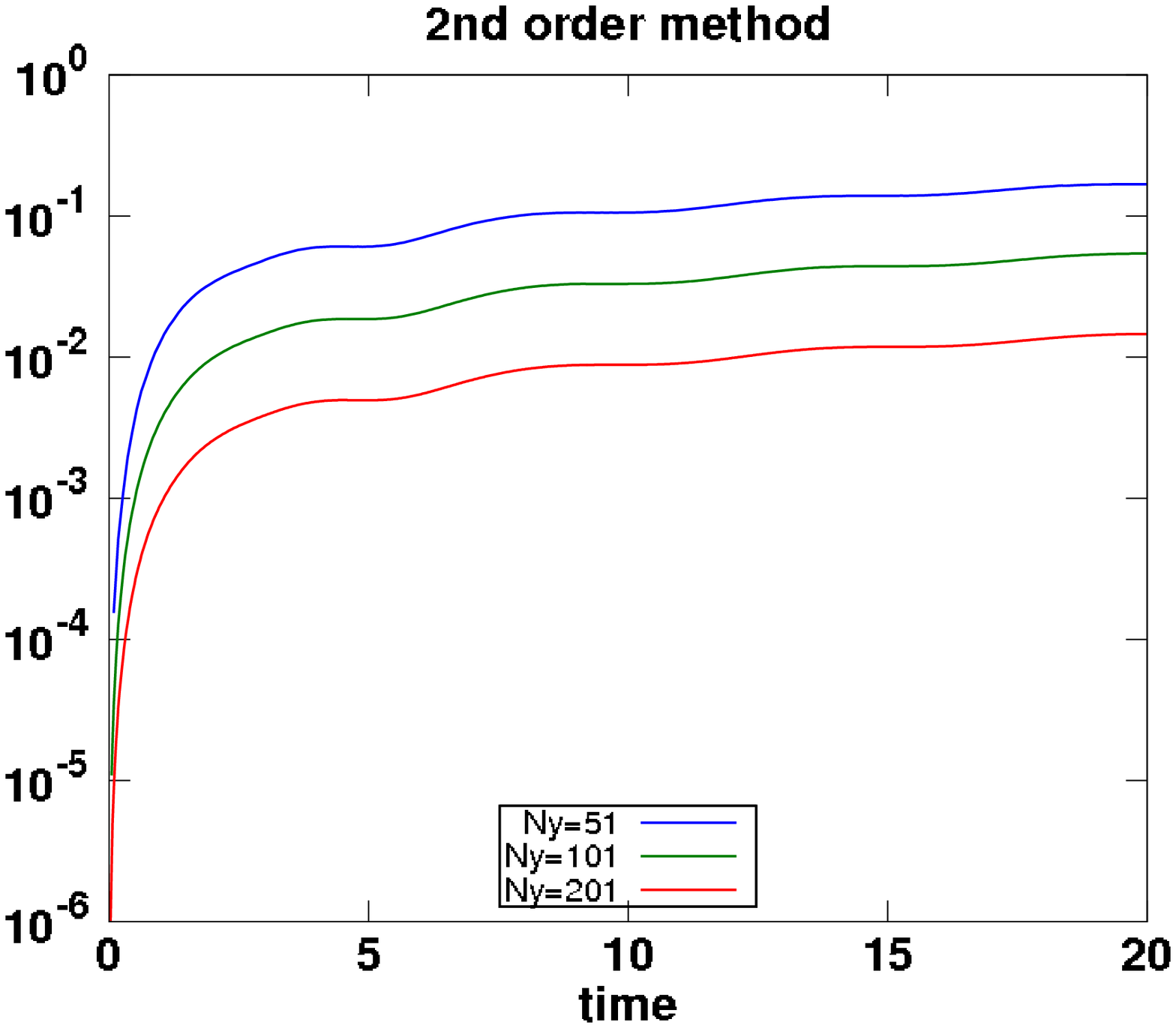}
    \includegraphics[height=0.41\textwidth]{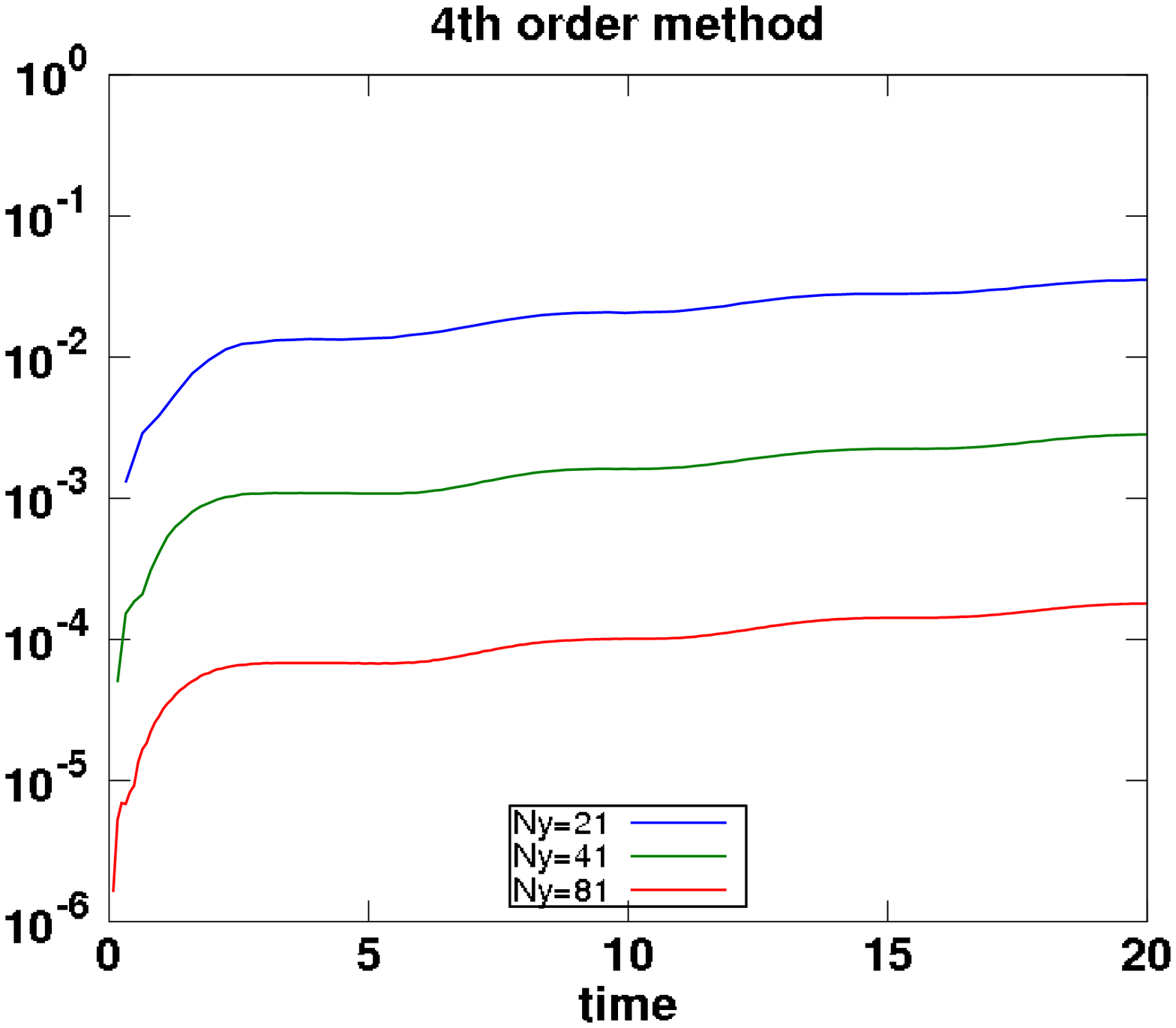}
    \caption{Max error as function of time for a Rayleigh surface wave with $\lambda=1$ and
      $\mu=0.01$. Results from the second and fourth order methods are shown on the left and right,
      respectively. The different colors corresponds to different number of grid points per wave
      length. Note that the grids are coarser for the fourth order computations.}
    \label{fig:rayleigh}
  \end{center}
\end{figure}
Since the wave length in the $y$-direction is one, the number of grid points per wave length
satisfies $P=N_y-1$. Results for the second order accurate method are shown on the left,
illustrating the expected convergence rate as the grid is refined. Note that at least 100 grid points
per wave length (green line) are needed to obtain a numerical solution to within about 5\% of the exact
solution. On the right side of the same figure, we show results for the fourth order
method. Here the error decreases by a factor of 16 when the number of grid points is doubled. In
this case, only 20 grid points per wave length are needed to make the error less than about 5\% of
the exact solution.

In our next experiment, we study how the accuracy depends on $\mu$ when the second order method is
used for propagating the Rayleigh wave \eqref{rayleigh}. The period of the wave is
\begin{equation}\label{rayleigh-period}
T = \frac{2\pi}{\om c_r} =\frac{1}{c_r}=\frac{1}{\tilde{\xi}_0 \sqrt{\mu}}.
\end{equation}
In Table~\ref{tab:num3} we show the max norm of the error after one and ten periods.  Note that the
period gets longer, i.e., the surface wave propagates slower as $\mu\to 0$. The case $\mu=0.1$ shows
close to second order convergence, both at time $t=T$ and $t=10\, T$. The error levels are
reasonable for a second order method, but increase with time because the error is dominated by phase
errors, i.e., the numerical solution propagates with a slightly different phase velocity compared to
the exact solution.  The error gets larger for $\mu=0.01$, and a finer grid must used to obtain
comparable error levels. For $\mu=0.001$, the grid must be refined further to obtain reasonable
error levels, and the cases $P=25$ and $P=50$ are inadequate. A visual inspection shows that after
10 periods, the numerical solution with $P=50$ is more than $180^\circ$ out of phase with the exact
solution (experiment not shown to save space). We only observe close to second order convergence
when the grid is refined from 200 to 400 grid points per wave length.
\begin{table}
\begin{center}\small
\begin{tabular}{l|c|c|c}
Case      & $P=1/h$ & $\| u_{err} \|_\infty(t=T)$ & $\| u_{err} \|_\infty(t=10\, T)$ \\ \hline
$\mu=0.1$ & 25     & $5.26\cdot 10^{-2}$         & $2.99\cdot 10^{-1}$ \\ 
$T=3.330$ & 50     & $1.48\cdot 10^{-2}$         & $9.26\cdot 10^{-2}$ \\ 
          & 100    & $3.85\cdot 10^{-3}$         & $2.44\cdot 10^{-2}$ \\ \hline
$\mu=0.01$ & 25     & $2.45\cdot 10^{-1}$         & $7.64\cdot 10^{-1}$ \\ 
$T=10.474$ & 50     & $1.07\cdot 10^{-1}$         & $5.59\cdot 10^{-1}$ \\ 
           & 100    & $3.32\cdot 10^{-2}$         & $2.06\cdot 10^{-1}$ \\ 
           & 200    & $8.86\cdot 10^{-3}$         & $5.73\cdot 10^{-2}$ \\ \hline
%
$\mu=0.001$ & 100   & $1.96\cdot 10^{-1}$         & $7.69\cdot 10^{-1}$ \\ 
$T=33.104$  & 200   & $7.40\cdot 10^{-2}$         & $4.25\cdot 10^{-1}$ \\ 
            & 400   & $2.13\cdot 10^{-2}$         & $1.36\cdot 10^{-1}$ \\ 
\end{tabular}
\end{center}
\caption{The max norm of the error in the numerical evolution of the Rayleigh surface wave, after
  one and ten periods. Note how the number of grid points per wave length, $P=1/h$, must be
  drastically increased to maintain the accuracy as $\mu$ becomes smaller. }\label{tab:num3}
\end{table}

Note that for $\mu=0.001$, the grid with 200 grid points per wave length gives of the order 10
percent accuracy after one period ($\| \ub_s \|_\infty\approx 0.545$). This grid is about 10 times
finer than what is normally required to get that accuracy with a second order
method~\cite{KreOli72}. In the $x$-direction, the gradient of the exact solution is the largest
along $x=0$, and $|v_x|=|u_y|$ for all $\mu$. In the limit $\mu\to 0$, it is straight forward to
show $|u_x|=|v_y|$. Hence, the gradient of the exact solution is of the same order in both
directions, and conclude that solution is extremely well resolved on the grid.  Furthermore, the phase
velocity of the surface wave becomes slower and slower as $\mu\to 0$, while the time step is
governed by $\sqrt{\lambda+3\mu}$, which tends to $\sqrt{\lambda}=1$. Hence, the temporal resolution
of the surface wave only improves as $\mu\to 0$.

The analysis of the phase velocity in \S\ref{sec:trunc} shows that truncation errors in a second
order accurate method perturb the generalized eigenvalue according to
\begin{equation}\label{xi-pert}
\tilde{\xi} = \tilde{\xi}_0 + \epsilon,\quad \epsilon = \frac{\lambda h^2\om^2|\alpha_0|}{\mu}.
\end{equation}
The perturbed generalized eigenvalue corresponds to a perturbed phase velocity $c_r' =
\sqrt{\mu}\tilde{\xi}$. Assuming that phase errors dominate the numerical errors, the amplitude of
the error follows by
\[
e(t) = \om(c_r' - c_r)t = \om\sqrt{\mu}\,\epsilon t.
\]
The period of the surface wave follows from \eqref{rayleigh-period}, so $e(T)= C_1\epsilon T$, $C_1=\mbox{const}$. For a
computational grid with grid size $h=1/P$, \eqref{xi-pert} gives
\[
\epsilon = \frac{C_2 \lambda}{ P^2 \mu},\quad C_2=\mbox{const}.
\]
Hence, to maintain a constant error level in the numerical solution after a fixed number of periods,
we must choose $P\sqrt{\mu}=\mbox{const.}$, if $\lambda$ is constant. This assertion is tested by
the numerical experiment shown on the left side of Figure~\ref{fig:4}. Here we show the max error as
function of time scaled by the period of the solution. The first case (red curve) corresponds to
$\mu=0.1$, with period $T=3.33$ and resolution $P=40$ grid points per wave length. Notice how
closely this error curve follows the case $\mu=10^{-3}$, with period $T=33.104$ and a grid with 400
grid point per wave length. We conclude that the second order method needs a prohibitively fine
computational grid to accurately calculate surface waves for small values of $\mu$. 
\begin{figure}
\begin{centering}
\includegraphics[width=0.47\textwidth]{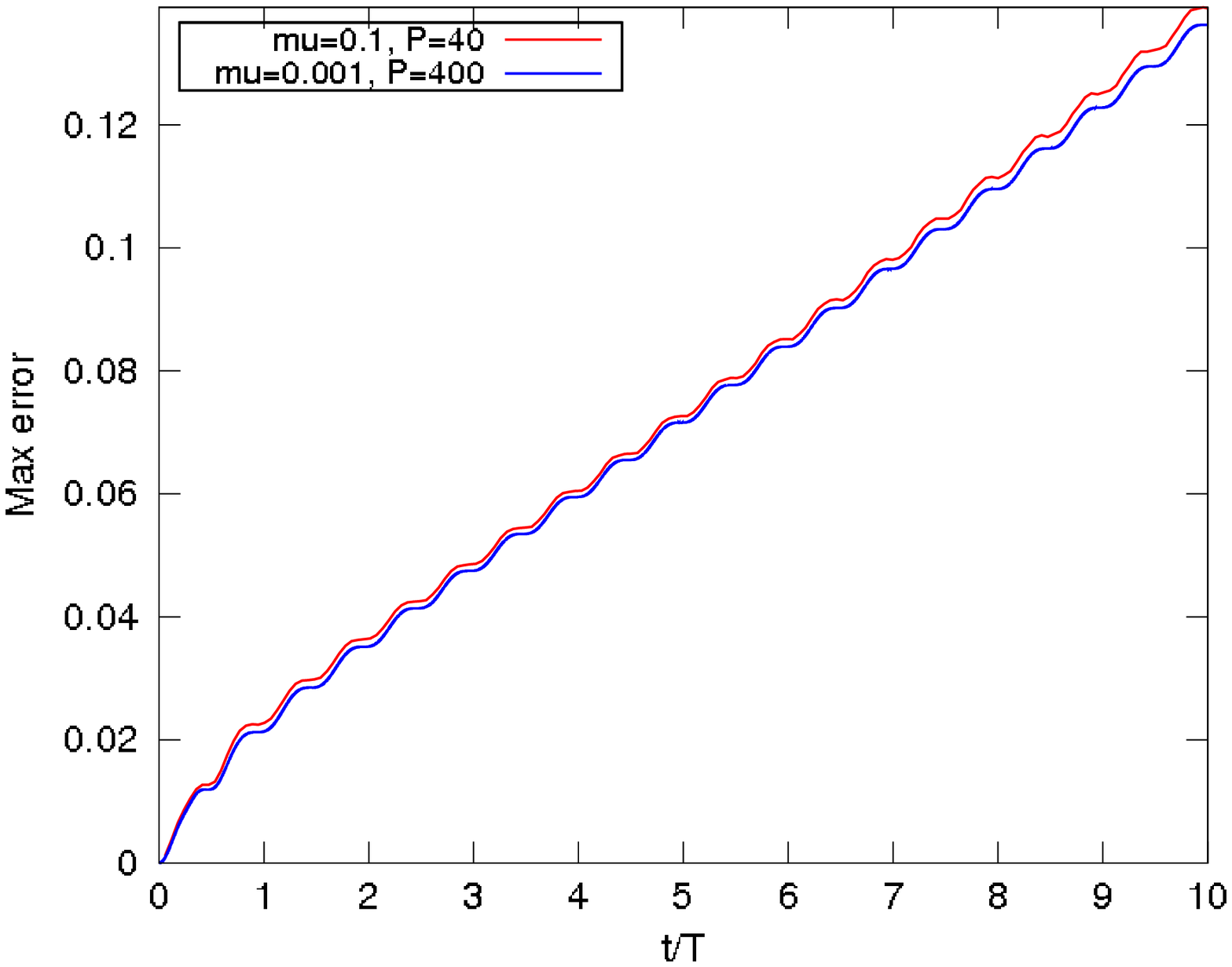}
\includegraphics[width=0.47\textwidth]{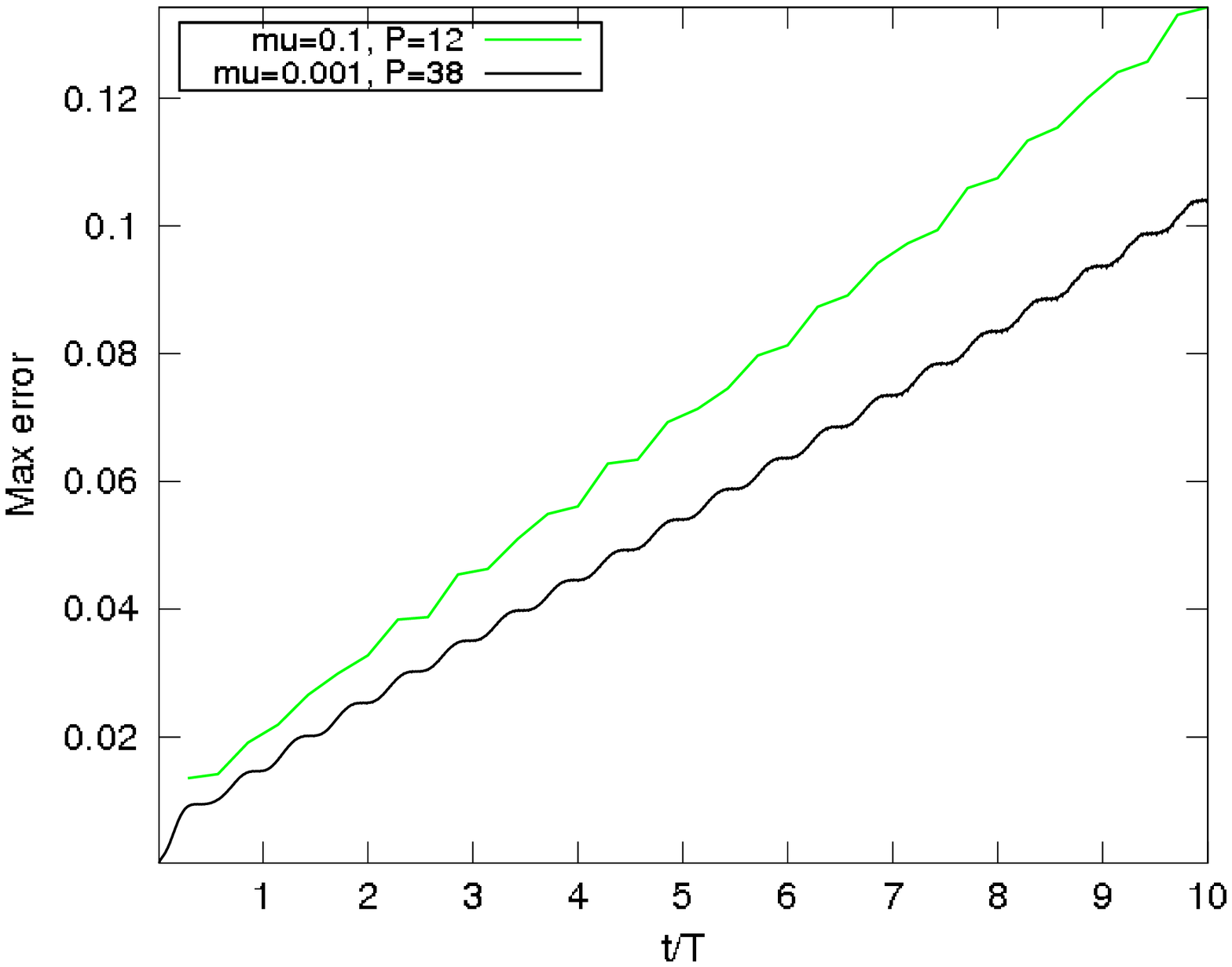}
\caption{Max error in the numerical evolution of the Rayleigh surface wave, as function of time
  scaled by the period, $T=1/(\tilde{\xi}_0\sqrt{\mu})$. For the second order method (left), the
  case $\mu=0.1$ with $P=40$ is shown in red and $\mu=0.001$ with $P=400$ is shown in blue. For the
  fourth order method (right), the case $\mu=0.1$ with $P=12$ (green) and $\mu=0.001$ with $P=38$
  (black) give comparable error levels.}
\label{fig:4}
\end{centering}
\end{figure}

We repeat the above experiment with a fourth order accurate method. The results are shown on the
right side of Figure~\ref{fig:4}. In this case we obtain similar error levels using a
significantly coarser grid. For $\mu=0.1$ and $\mu=0.001$, we use $P=12$ and $P=38$,
respectively. For the fourth order method, the perturbation of the generalized eigenvalue is given
by \eqref{gen-eig-4}. Using the same argument as for the second order method, we must choose
$P\mu^{1/4}=\mbox{const.}$ to obtain a constant error level in the numerical solution after a fixed
number of periods. This scaling is approximately preserved in these calculations, since
\[
P \mu^{1/4} \approx \begin{cases}
6.748,&P=12,\ \mu=0.1,\\
6.757,&P=38,\ \mu=0.001.
\end{cases}
\]
We conclude that the fourth order method is much better suited for simulations when $\mu$ is
small. Compared to the second order method, the fourth order method needs a smaller number of grid
points per wave length, and the required resolution grows much slower as $\mu\to 0$. 

To indicate how much more efficient the fourth order method is in practice, we give some execution
times obtained on a MacBook Pro laptop computer. The above numerical experiments for $\mu=0.001$
required $20,604$ seconds ($\approx 5$ hours, 43 minutes) for the second order method with
$P=400$. Similar accuracy was obtained with the fourth order method using $P=38$, but this
calculation only took $60$ seconds. Hence, for this problem the fourth order method was 343 times
faster than the second order method.

\subsection{Mode to mode conversion}

Consider a compressional wave of unit amplitude traveling in the negative $x$-direction in a
homogeneous material, with displacement
\[
\ub^{(in)} = \begin{pmatrix} k \\ \om \end{pmatrix} e^{i(\xi t + k x + \om y)},\quad
k=\cos\phi>0,\quad \om = \sin\phi,\quad \xi>0.
\]
If this wave encounters a free surface boundary at $x=0$, it will be reflected and split into two
waves that both travel in the positive $x$-direction,
\begin{alignat*}{2}
\ub^{(out)} &= \ub^{(P}) + \ub^{(S)},\\
\ub^{(P)} &= R_p \begin{pmatrix} -k \\ \om \end{pmatrix} e^{i(\xi t - k x + \om y)},\\
\ub^{(S)} &= \frac{R_s}{\sqrt{\alpha^2 k^2 + \om^2}} \begin{pmatrix} -\om \\ -\alpha k \end{pmatrix} e^{i(\xi t -
  \alpha k x + \om y)},\quad \alpha>0.
\end{alignat*}
The reflected waves correspond to a compressional and a shear wave, since the curl of $\ub^{(P)}$
and the divergence of $\ub^{(S)}$ are zero. In order for $\ub^{(in)}$ and $\ub^{(out)}$ to satisfy
the elastic wave equation \eqref{uv-eqn} with $F_1=F_2=0$, the frequency and wave numbers must
satisfy the elementary relations
\begin{equation}\label{dispersion}
\xi^2 = (\lambda+2\mu)(k^2 + \om^2) = \lambda+2\mu,\quad \xi^2 = \mu(\alpha^2k^2 + \om^2).
\end{equation}
We select the signs of $\xi$ and $\alpha$ such that $\ub^{(in)}$ and $\ub^{(out)}$ travel in
the negative and positive $x$-direction, respectively. The amplitudes of the reflected waves, $R_p$
and $R_s$, are functions of $\lambda$, $\mu$, and the angle of the incident wave, $\phi$. The
amplitudes $R_p$ and $R_s$ are uniquely determined by the free surface boundary conditions
\eqref{uv-bc} (with $g_1=g_2=0$). For a more detailed discussion, we refer to
Achenbach~\cite{achenbach}, \S~5.6.

As a consequence of the relation \eqref{dispersion},
\[
\alpha^2 = 1+\frac{\lambda+\mu}{\mu\cos^2\phi}.
\]
Hence, when $\mu\ll \lambda$, the reflected S-wave will propagate almost parallel to the $x$-direction because
$\alpha^2\gg 1$, see Figure~\ref{fig:6}. The wave lengths of the compressional and shear waves are
given by
\[
L_p = \frac{2\pi}{\sqrt{k^2+\om^2}} = 2\pi,\quad L_s = 2\pi \sqrt{\frac{\mu}{\lambda+2\mu}}.
\]
Note that the wave length of the compressional wave is fixed, while $L_s$ becomes small as $\mu\to 0$. 
\begin{figure}
\begin{centering}
\includegraphics[width=0.32\textwidth]{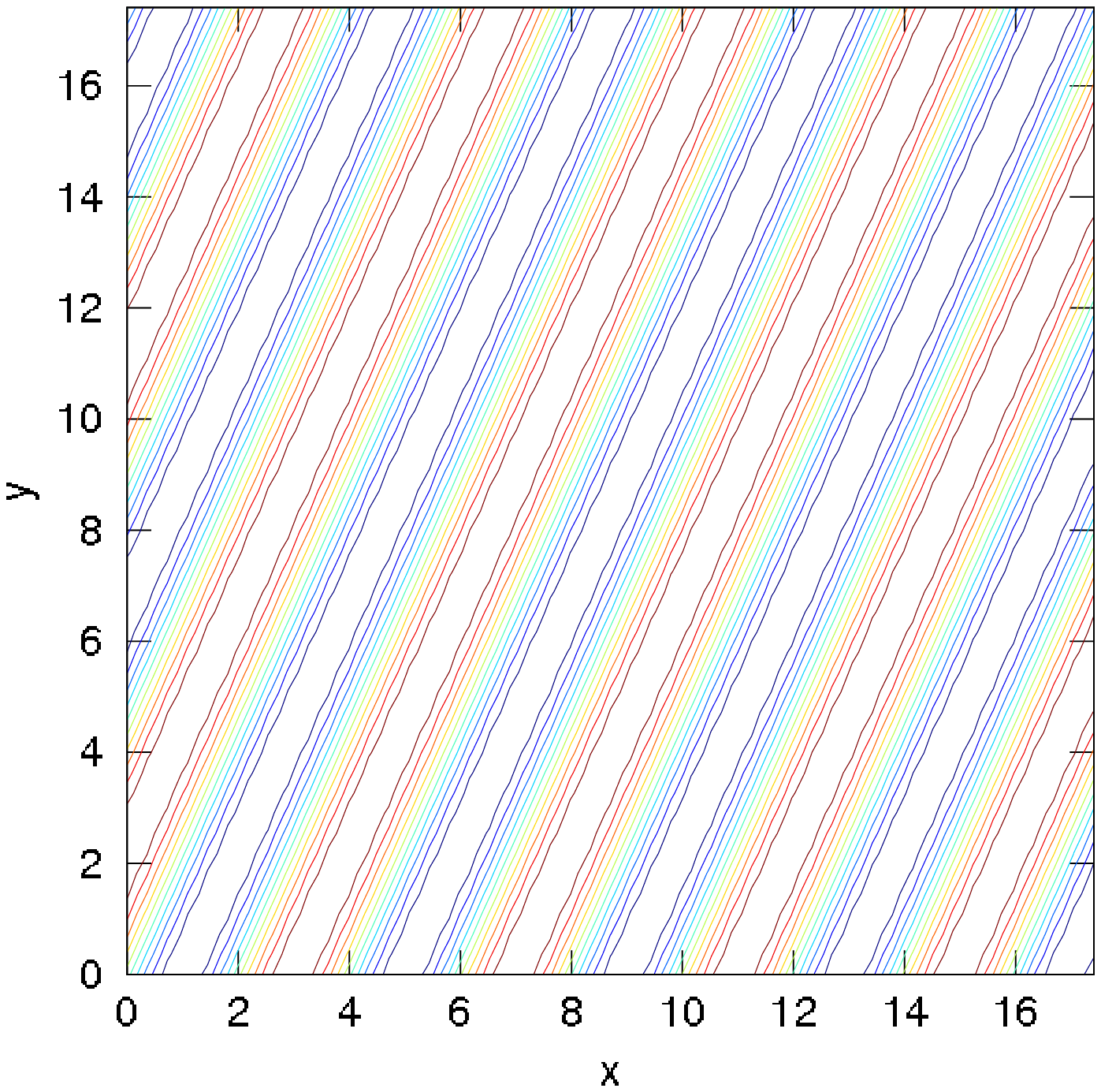}
\includegraphics[width=0.32\textwidth]{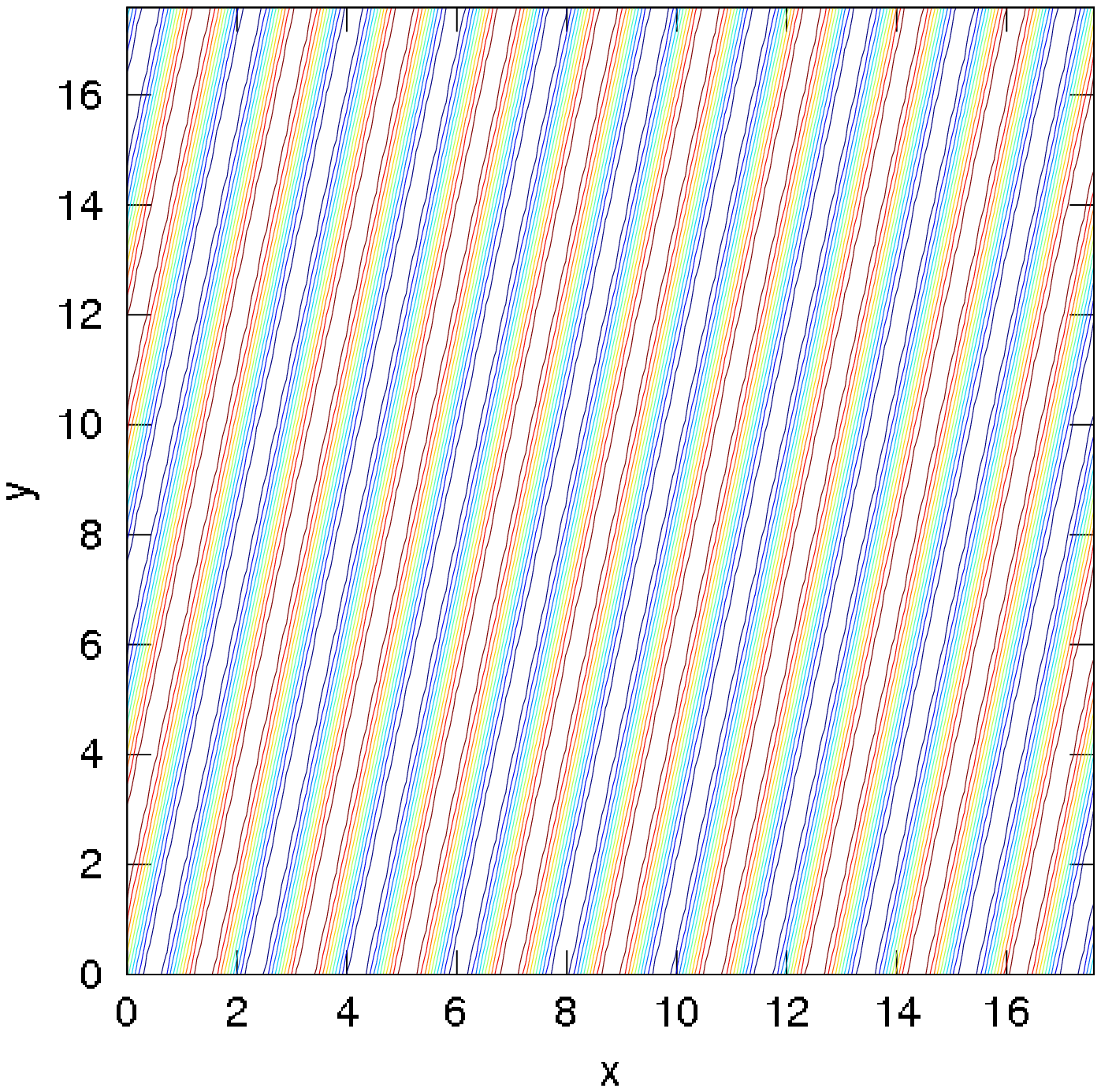}
\includegraphics[width=0.32\textwidth]{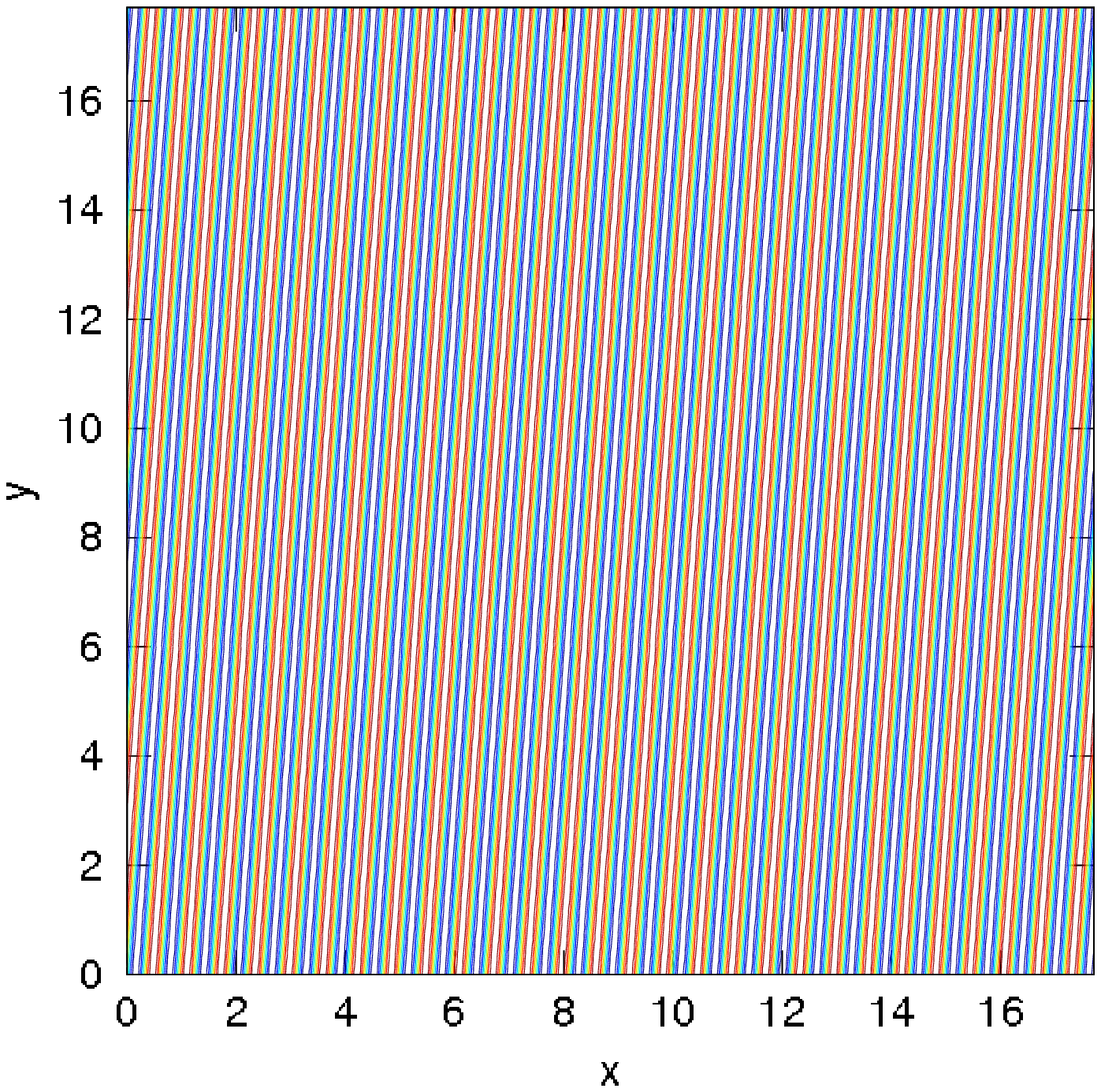}
\caption{$v$-component of the outgoing shear wave as function of $(x,y)$ at $t=0$. The angle of the
  incoming P-wave is $\phi=\pi/4$.  The frames correspond to $\mu=1.0$ (left), $\mu=0.1$ (middle),
  and $\mu=0.01$ (right).}
\label{fig:6}
\end{centering}
\end{figure}

To include two wave lengths of $\ub^{(in)}$ in the computational domain, we take $L_y=4\pi/\sin\phi$ and
$L_x=4\pi/\cos\phi$. As before, we impose periodic boundary conditions in the $y$-direction, a
Dirichlet boundary condition at $x=L_x$ and a free surface condition at $x=0$. By construction, the
function $\ub^{(in)} + \ub^{(out)}$ is $L_y$-periodic in the $y$-direction, satisfies the elastic
wave equation in the interior, and the free surface condition at $x=0$. In principle, we could compute a numerical
approximation of $\ub^{(in)} + \ub^{(out)}$ by adding a suitable forcing function to the Dirichlet
boundary condition at $x=L_x$. However, we instead choose to only compute the outgoing S-wave,
$\ub^{(S)}$. For this reason, we impose the inhomogeneous Dirichlet boundary condition
\[
\ub(L_x,y,t) = \ub^{(S)}(L_x,y,t),
\]
and take the forcing functions in the normal stress boundary conditions
\eqref{uv-bc} to be
\begin{alignat*}{2}
g_1 &= -\left( u^{(in)}_x + u^{(P)}_x\right) - \gamma^2 \left( v^{(in)}_y + v^{(P)}_y\right),\\
g_2 &= -\left( u^{(in)}_y + u^{(P)}_y + v^{(in)}_x + v^{(P)}_x\right).
\end{alignat*}
We use the exact solution $\ub^{(S)}$ as initial conditions for the numerical solution.

To accurately solve this problem numerically, it is necessary to resolve the short shear waves on
the computational grid. For this problem, we define the resolution in terms of the number of grid
points per shear wave length,
\[
P_s = \frac{L_s}{h} = \frac{\sqrt{\mu}}{h} \frac{2\pi}{\sqrt{\lambda+2\mu}}.
\]
We evaluate the error in the numerical solution as function of time for two materials. The first
material has ($\lambda=1$, $\mu=0.1$) and the second has ($\lambda=1$, $\mu=0.01$). As a
consequence, the period of the wave is slightly different for the two cases
\[
T = \frac{2\pi}{\xi} = \frac{2\pi}{\sqrt{\lambda+2\mu}}\approx
\begin{cases}
5.74, & \mu=0.1, \\
6.22, & \mu=0.01.
\end{cases}
\]

In Figure~\ref{fig:5} we show the error as function of normalized time, $t/T$, for the two
materials, using the fourth order accurate method.
\begin{figure}
\begin{centering}
\includegraphics[width=0.6\textwidth]{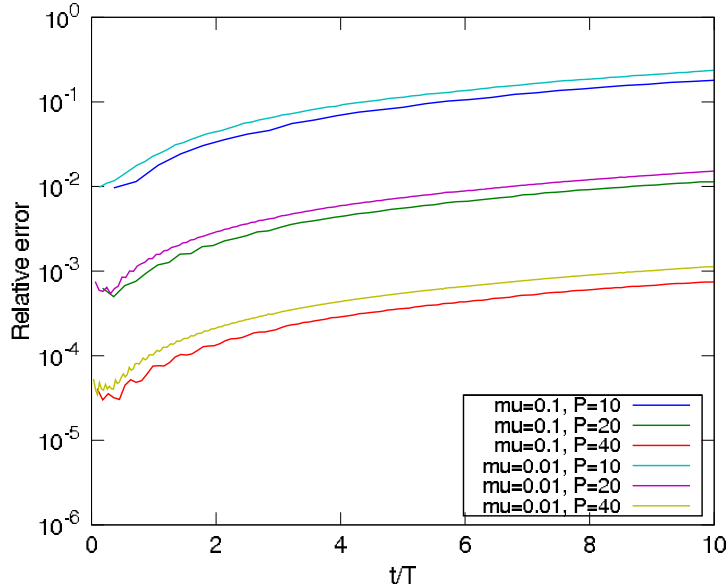}
\caption{ Results for computing the outgoing shear wave with different resolution, characterized by
  the number of grid points per wave length, $P_s$. The relative error in max norm is shown as
  function of time scaled by the period of the wave. Two cases are shown, ($\lambda=1$, $\mu=0.1$)
  and ($\lambda=1$, $\mu=0.01$).}
\label{fig:5}
\end{centering}
\end{figure}
Note that the error levels are comparable for the same number of grid points per wave length, and
converge to zero as ${\cal O}(P_s^{-4})$ as the grid is refined. Thus the mode to mode conversion
problem does {\em not} suffer from the same extreme resolution requirements as the surface wave
problem in the previous section. Because we have scaled the problem such that the P-waves have wave
length $2\pi$, the S-waves get a wave length of the order $2\pi\sqrt{\mu}$. Hence, to keep the number of grid
points per S-wave length constant for different materials, we have to choose the grid size according
to
\[
h = \frac{2\pi}{\sqrt{\lambda+2\mu}}\frac{\sqrt{\mu}}{P_s}.
\]
Compared to the material with $\mu=0.1$, the grid size must therefore be taken about a factor of $\sqrt{10}$
smaller for the case $\mu=0.01$, to obtain the same number of grid points per wave length. This
scaling is independent of the order of accuracy in the numerical method.  

No surface waves can be triggered by a propagating P-wave because the relation \eqref{dispersion}
shows that $\xi^2/(\mu \om^2)>1$. However, evanescent modes due to an interior forcing function could trigger
both S-waves and surface waves. Since the surface waves are only slightly slower than the S-waves,
their wave length is of the same order as the length of an S-wave of the same frequency. If the
problem is scaled such that the P-wave length is constant, both the S-wave and the surface waves would
therefore have wave lengths of the order $\sqrt{\mu}$. Based on the results of Section~\ref{sec:s-waves}, a
second order accurate method would need a grid size of the order $h\sim \mu$ to maintain a constant
accuracy in the numerical solution as $\mu\to 0$. For a fourth order method, it would suffice to use $h\sim \mu^{3/4}$.
 

\section{Conclusions}\label{sec:conc}
We have developed a normal mode analysis for the half-plane problem of the elastic wave equation
subject to a free surface boundary condition. Our analysis allows the solution to be estimated in
terms of the boundary data, showing that the solution is as smooth as the boundary forcing. Hence,
using the terminology of~\cite{Kreiss-Ortiz-Petersson}, the problem is {\it boundary stable}. The
dependence on the material properties is transparent in our estimates. Using a modified equation
approach, the normal mode technique was extended to analyze the influence of truncation errors in a
finite difference approximation. Our analysis explains why the number of grid points per wave length
must be so large when calculating surface waves in materials with $\mu/\lambda\ll 1$. To obtain a
fixed error in the phase velocity of Rayleigh surface waves, our analysis predicts that the grid
size must be proportional to $\mu^{1/2}$ for a second order method, when $\lambda=\mbox{const}$. For
a fourth order method, the analysis shows that it suffices to use $h\sim\mu^{1/4}$. These scalings
have been confirmed by numerical experiments.

It is theoretically possible to derive stable finite difference schemes that give higher than fourth
order accuracy. These methods use wider stencils that are more expensive to evaluate, but for the
surface wave problem, it would suffice to use a grid size of the order $h\sim\mu^{1/p}$, where $p$
is the order of accuracy. For sufficiently small values of $\mu$ these methods should be
more efficient as the order of accuracy increases. However, numerical experiments must be performed
to evaluate how small $\mu$ must actually be to compensate for the higher computational
complexity of these very high order accurate methods.

\section{Acknowledgments}
We thank Tom Hagstrom for discussions that lead to a simple proof of Lemma~\ref{lem:2-anders}.

\appendix
\section{Miscellaneous lemmata}\label{app:a}
\begin{lemma}\label{lem:2-anders}
Let $\om$ be a real number and let $s=\eta+i\xi$ be a complex number where $\eta>0$. Consider the relation
\[
\kappa=\sqrt{\om^2+s^2},
\]
where the branch cut in the square root is defined by
\begin{equation}\label{branch-cut}
 -\pi< \arg(\alpha+i\beta)\le \pi,\quad \arg\sqrt{\alpha+i\beta}=\halv\arg(\alpha+i\beta).
\end{equation}
Then,
\begin{equation}
 \Rk \ge \eta.
\end{equation}
\end{lemma}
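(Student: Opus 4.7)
The plan is to write $\kappa = a+ib$ with $a,b$ real, match real and imaginary parts of the defining identity $\kappa^2 = \omega^2+s^2$, and reduce the desired inequality $a\ge\eta$ to a sign check on a quadratic in $a^2$.

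First I would establish $a\ge 0$ from the branch-cut convention \eqref{branch-cut}. Expanding gives $\omega^2+s^2 = (\omega^2+\eta^2-\xi^2) + 2i\eta\xi$; since $\eta>0$, the imaginary part vanishes only when $\xi=0$, in which case the real part $\omega^2+\eta^2$ is strictly positive. Hence $\arg(\omega^2+s^2)\in(-\pi,\pi)$ lies strictly above the negative real axis, so its halved argument lies in $(-\pi/2,\pi/2]$ and the principal square root has non-negative real part.

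Matching real and imaginary parts of $(a+ib)^2 = \omega^2+s^2$ yields the coupled relations
\[
a^2-b^2 = \omega^2+\eta^2-\xi^2, \qquad ab = \eta\xi.
\]
A short case check rules out $a=0$ (it would force $\xi=0$ and then $-b^2 = \omega^2+\eta^2>0$, a contradiction), so I can substitute $b = \eta\xi/a$ into the first relation to obtain the quadratic
\[
X^2 - (\omega^2+\eta^2-\xi^2)\,X - \eta^2\xi^2 = 0, \qquad X := a^2.
\]
Because the product of the two roots is $-\eta^2\xi^2\le 0$, exactly one root is non-negative, and this is $X=a^2$.

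To conclude, I would evaluate the left-hand side of the quadratic at $X=\eta^2$; a brief simplification collapses it to $-\omega^2\eta^2\le 0$. Since the quadratic is convex and non-positive at $X=\eta^2$, that point lies at or below the larger root, so $\eta^2\le a^2$, and taking the non-negative square root gives $\Rk = a \ge \eta$, as required. The only delicate step in the whole argument is the initial branch-cut bookkeeping that ensures $a\ge 0$; once that is in hand, everything else reduces to evaluating a quadratic at a single point.
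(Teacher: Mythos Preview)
Your proof is correct and follows essentially the same route as the paper: write $\kappa=a+ib$, square, match real and imaginary parts, eliminate $b$, and reduce the claim to an elementary inequality for $a^2$. The only cosmetic differences are that the paper first rescales by $\eta$ (so the target becomes $a\ge 1$) and then argues via monotonicity of $a^2-\xi'^2/a^2$ in $a^2$, whereas you work unscaled and test the sign of the resulting quadratic at $X=\eta^2$; these are two phrasings of the same observation. Your branch-cut discussion is slightly more explicit than the paper's, which simply asserts $a\ge 0$; one small wording slip is that ``exactly one root is non-negative'' fails when $\xi=0$ (both roots are then non-negative), but since you have already shown $a>0$ this does not affect the conclusion.
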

\begin{proof}
Since $\eta>0$, we can write
\begin{equation}\label{kappa-def}
s = \eta(1+i{\xi'}),\quad \kappa = \eta\sqrt{{\om'}^2 + (1+i{\xi'})^2},\quad
{\xi'}=\frac{\xi}{\eta},\quad {\om'} =\frac{\om}{\eta}.
\end{equation}
Define real numbers $a$ and $b$ such that
\begin{equation}\label{a+ib}
a + i b = \sqrt{\tilde{\om}^2 + (1+i{\xi'})^2},\quad a\geq 0.
\end{equation}
Squaring relation \eqref{a+ib} and identifying the real and imaginary parts give
\begin{alignat*}{2}
ab &= {\xi'},\\
a^2 - b^2 &=  1 - {\xi'}^2 + {\om'}^2.
\end{alignat*}
The first relation gives $b={\xi'}/a$, which inserted into the second relation results in
\begin{equation}\label{f-a2}
a^2 - \frac{{\xi'}^2}{a^2} = 1 - {\xi'}^2 + {\om'}^2.
\end{equation}
Note that the left hand side is a monotonically increasing function of $a^2$. When ${\om'}=0$, 
equation \eqref{f-a2} is solved by $a^2=1$. The right hand side of \eqref{f-a2} is a monotonically increasing
function of ${\om'}^2$. Therefore $a^2>1$ for ${\om'}^2>0$. We conclude that the unique
solution of \eqref{f-a2} satisfies
\[
a^2\geq 1.
\]
Because $a$ must be non-negative, we have $a\geq 1$. Relations \eqref{kappa-def} and \eqref{a+ib}
give
\[
\Rk = \eta\, {\rm Re}\,\sqrt{{\om'}^2 + (1+i{\xi'})^2} = \eta\, a \geq \eta.
\]
\end{proof}

\begin{corollary}
Let $\om$ be a real number, $s=\eta + i\xi$ be a complex number with $\eta>0$, and let $0<\gamma <\infty$ be a constant. Then
there is another constant $0<\delta<\infty$ such that
\begin{equation}
{\rm Re}\,\sqrt{\om^2+\frac{s^2}{\gamma^2}} \ge \delta\eta,\quad \eta = {\rm Re}\,(s) > 0.
\end{equation}
\end{corollary}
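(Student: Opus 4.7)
The plan is to reduce this corollary directly to Lemma~\ref{lem:2-anders} by a simple rescaling. Since $\gamma>0$ is real, I would factor out $1/\gamma^2$ under the square root and write
\[
\omega^2 + \frac{s^2}{\gamma^2} = \frac{1}{\gamma^2}\bigl((\gamma\omega)^2 + s^2\bigr).
\]
Setting $\omega' = \gamma\omega$ (still a real number), the expression inside is of the form $\omega'^2 + s^2$ with $\mathrm{Re}(s) = \eta > 0$, which is exactly the situation covered by the lemma.

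The next step is to justify pulling the factor $1/\gamma^2$ out of the square root under the branch convention \eqref{branch-cut}. Because $1/\gamma^2$ is a positive real number, $\arg\bigl((1/\gamma^2)(\omega'^2+s^2)\bigr) = \arg(\omega'^2+s^2)$, so the chosen branch of the square root gives
\[
\sqrt{\omega^2 + \frac{s^2}{\gamma^2}} = \frac{1}{\gamma}\sqrt{\omega'^2 + s^2},
\]
with no sign ambiguity. Taking real parts and applying Lemma~\ref{lem:2-anders} to $\omega'$ yields
\[
\mathrm{Re}\sqrt{\omega^2 + \frac{s^2}{\gamma^2}} = \frac{1}{\gamma}\,\mathrm{Re}\sqrt{\omega'^2 + s^2} \;\geq\; \frac{\eta}{\gamma},
\]
so the desired estimate holds with $\delta = 1/\gamma$, which satisfies $0 < \delta < \infty$ because $0<\gamma<\infty$.

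There is essentially no obstacle here; the only subtle point is verifying that the branch cut convention respects the factorization by the positive constant $1/\gamma^2$, which is immediate because multiplication by a positive real does not change the argument. I would present the proof as two short displays: first the algebraic identity, then the application of the lemma and extraction of the constant $\delta = 1/\gamma$.
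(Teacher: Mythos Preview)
Your proof is correct and is essentially the same as the paper's: both reduce to Lemma~\ref{lem:2-anders} by a one-parameter rescaling and arrive at $\delta = 1/\gamma$. The only cosmetic difference is that the paper rescales $s$ (setting $s' = s/\gamma$ so that $\omega^2 + s^2/\gamma^2 = \omega^2 + s'^2$) rather than $\omega$, which avoids the need to factor anything out of the square root; your route works just as well once you note, as you do, that the branch convention is preserved under multiplication by a positive real.
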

\begin{proof}
Let ${s'}=s/\gamma$. Lemma \ref{lem:2-anders} proves that
\[
{\rm Re}\,\sqrt{\om^2+\frac{s^2}{\gamma^2}} = {\rm Re}\,\sqrt{\om^2+{s'}^2} \geq {\rm
  Re}\,({s'}) = \frac{1}{\gamma}{\rm Re}\,(s).
\]
Hence, $\delta=1/\gamma>0$ and the corollary follows.
\end{proof}

\section{The case $\om\to 0$}\label{sec:omega->0}
We now extend the boundary estimates in Section~\ref{sec:b-force} to the case $\om\to 0$ when
$s=s_0$, $|s_0|>0$ is fixed. In this limit,
\[
\left|\tilde{s}^2\right| = \left|\frac{s^2}{\mu\om^2}\right| \to \infty.
\]
For $|\tilde{s}|\gg 1$, we can simplify \eqref{eq:21} according to
\begin{equation}\label{eq1-mu0}
\tilde{s}^2\sqrt{\frac{\mu}{\lambda+2\mu}} \, \hat{u}_{01} + \frac{\tilde{s}^2}{2}\, \hat{u}_{02} =
-\frac{\lambda + 2\mu}{2\mu|\om|}\sqrt{\frac{\mu}{\lambda+2\mu}}\,\tilde{s}\hat{g}_1.
\end{equation}
In a similar way, \eqref{eq:22} becomes
\begin{equation}\label{eq2-mu0}
\hat{u}_{01} + \frac{2}{\tilde{s}^2}\, \hat{u}_{02} = -\frac{i}{2\om \tilde{s}^2}\, \hat{g}_2.
\end{equation}
Solving the latter equation for $\hat{u}_{01}$ and inserting into \eqref{eq1-mu0} gives
\[
\left( -2\sqrt{\frac{\mu}{\lambda+2\mu}} + \frac{\tilde{s}^2}{2}\right) \, \hat{u}_{02} =
-\frac{(\lambda + 2\mu)\tilde{s}\,\hat{g}_1}{2\mu|\om|}\sqrt{\frac{\mu}{\lambda+2\mu}} +
\frac{i\, \hat{g}_2}{2\om} \sqrt{\frac{\mu}{\lambda+2\mu}}
\]
For large $|\tilde{s}|$, we have to leading order,
\begin{equation}\label{eq3-mu0}
\hat{u}_{02} = -\sqrt{\frac{\lambda+2\mu}{\mu}}\frac{\hat{g}_1}{|\om|\tilde{s}} +
\sqrt{\frac{\mu}{\lambda+2\mu}}\,\frac{i\,\hat{g}_2}{\om \tilde{s}^2}
\end{equation}
Note that $s=\tilde{s}\sqrt{\mu}|\om|$, and
\[
\frac{1}{\tilde{s}|\om|} = \frac{\sqrt{\mu}}{s},\quad \frac{1}{\tilde{s}^2|\om|} =
\frac{\mu|\om|}{s^2},\quad
\frac{1}{\tilde{s}^2} = \frac{\mu \om^2}{s^2}.
\]
Hence, \eqref{eq3-mu0} and \eqref{eq2-mu0} give
\[
\hat{u}_{02} = -\sqrt{\lambda+2\mu}\,\frac{\hat{g}_1}{s} + {\cal O}(\om),\quad
\hat{u}_{01} = -\frac{i\mu\om\,\hat{g}_2}{2s^2} + {\cal O}(\om^2),\quad \om\to 0.
\]
The solution on the boundary follows from \eqref{uv-bndry} and gives directly
\[
\hat{u}(0) = \hat{u}_{01} + \hat{u}_{02} = -\frac{\sqrt{\lambda+2\mu}\,\hat{g}_1}{s} + {\cal O}(\om).
\]
For large $|\tilde{s}|$, we can simplify the expression for $\hat{v}(0)$,
\[
\hat{v}(0) = -\frac{i|\om|\tilde{s}\,\hat{u}_{01}}{\om} -
\frac{i\om\,\hat{u}_{02}}{|\om|\tilde{s}}\sqrt{\frac{\lambda+2\mu}{\mu}} =
-\frac{\sqrt{\mu}\,\hat{g}_2}{2s} + {\cal O}(\om).
\]
The expressions for $\hat{u}(0)$ and $\hat{v}(0)$ show that the solution is well behaved in the
limit $\om\to 0$.

\bibliography{refs}

\end{document}